\numberwithin{equation}{section}
\def\pa{\partial}
\let\Re=\undefined\DeclareMathOperator*{\Re}{Re}
\newcommand{\R}{\mathbb{R}}
\newcommand{\Z}{\mathbb{Z}}
\newcommand{\LL}{\mathcal{L}}
\newtheorem{theorem}{Theorem}[section]
\newtheorem{lemma}[theorem]{Lemma}
\newtheorem{conjecture}[theorem]{Conjecture}
\newtheorem{proposition}[theorem]{Proposition}
\theoremstyle{definition}
\newtheorem{remark}[theorem]{Remark}
\newcommand{\Extend}[5]{\ext@arrow0099{\arrowfill@#1#2#3}{#4}{#5}}
\begin{document}
\title[Linear restriction estimates]{ Restriction estimates in a conical singular space: wave equation }

\author{Xiaofen Gao}
\address{Department of Mathematics, Beijing Institute of Technology, Beijing 100081;}
\email{3120185710@bit.edu.cn}

\author{Junyong Zhang}
\address{Department of Mathematics, Beijing Institute of Technology, Beijing 100081; Department of Mathematics, Cardiff University, UK}
\email{zhang\_junyong@bit.edu.cn; ZhangJ107@cardiff.ac.uk}

\author{Jiqiang Zheng}
\address{Institute of Applied Physics and Computational Mathematics, Beijing 100088}
\email{zheng\_jiqiang@iapcm.ac.cn; zhengjiqiang@gmail.com}

\begin{abstract}

 We study the restriction estimates in a class of conical singular space $X=C(Y)=(0,\infty)_r\times Y$ with the metric $g=\mathrm{d}r^2+r^2h$, where the cross section $Y$ is a compact $(n-1)$-dimensional closed Riemannian manifold $(Y,h)$. Let $\Delta_g$ be the Friedrich extension positive Laplacian on $X$, and consider the operator $\mathcal{L}_V=\Delta_g+V$ with $V=V_0r^{-2}$, where $V_0(\theta)\in\mathcal{C}^\infty(Y)$ is a real function such that the operator $\Delta_h+V_0+(n-2)^2/4$ is positive. In the present paper, we  prove a type of modified restriction estimates for the solutions of wave equation associated with $\mathcal{L}_V$.  The smallest positive eigenvalue of the operator $\Delta_h+V_0+(n-2)^2/4$ plays an important role in the result.

As an application, for independent of interests, we prove local energy estimates and Keel-Smith-Sogge estimates for the wave equation in this setting.

\end{abstract}

 \maketitle

\begin{center}
 \begin{minipage}{100mm}
   { \small {{\bf Key Words:}   Adjoint restriction estimates, Keel-Smith-Sogge estimate, conical singular space, inverse-square potential,  wave equation}
      {}
   }\\

 \end{minipage}
 \end{center}

 \tableofcontents 

\section{Introduction}

\noindent

Restriction estimate as one of the cores in harmonic analysis is originally proposed by Stein \cite{Stein79} for sets $S$ having non-vanishing curvature, including hyper-surfaces such as sphere, paraboloid and cone. In this paper, we focus on the wave equation whose characteristic set is a cone, and so we take the cone to illustrate the details of the restriction estimates.\vspace{0.2cm}


Let $S$ be a smooth compact nonempty subset of the cone $\{(\tau,\xi)\in\R\times\R^n:\tau=|\xi|\}$ with $n\geq2$. For any Schwartz function $F$ on $S$, the inverse space-time Fourier transform of the measure $F\mathrm{d}\sigma$ is given by
\begin{align}\label{F-vee}
(F\mathrm{d}\sigma)^\vee(t,x)&=\int_SF(\tau,\xi)e^{2\pi i(x\cdot\xi+t\tau)}\mathrm{d}\sigma(\xi)\\\nonumber
&=\int_{\R^n}F(|\xi|,\xi)e^{2\pi i(x\cdot\xi+t|\xi|)}\frac{\mathrm{d}\xi}{|\xi|}.
\end{align}
where the conical measure $\mathrm{d}\sigma$ is the pullback of the measure $\frac{\mathrm{d}\xi}{|\xi|}$ under the projection $(\tau,\xi)\mapsto\xi.$ The restriction problem is to seek the optimal range of $p$ and $q$ satisfying the adjoint restriction estimate
\begin{equation}\label{est:res}
\|(F\mathrm{d}\sigma)^\vee\|_{L^q_{t,x}(\R\times\R^n)}\leq C_{p,q,n,S}\|F\|_{L^p(S,\mathrm{d}\sigma)}.
\end{equation}
The two necessary conditions such that \eqref{est:res} holds are
\begin{equation}\label{condition}
q>\frac{2n}{n-1}\quad \text{and}\quad \frac{n+1}{q}\leq\frac{n-1}{p'},
\end{equation}
which come from the decay of $(\mathrm{d}\sigma)^\vee$  and Knapp example, see \cite{Stein,Tao}.

A famous conjecture is to claim that the two necessary conditions also are sufficient for \eqref{est:res}, see \cite{Stein,Tao}. More precisely,
\begin{conjecture}\label{conj}
The estimate ~\eqref{est:res}~ is true if and only if ~\eqref{condition}~ hold.
\end{conjecture}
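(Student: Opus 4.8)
Since Conjecture~\ref{conj} is a central open problem in harmonic analysis, what follows is a description of the route one would pursue rather than a complete argument. The ``only if'' direction is the elementary half and was essentially indicated above: evaluating \eqref{est:res} against $F\equiv 1$ on a compact piece of the cone and exploiting the (anisotropic) decay of $(\mathrm d\sigma)^\vee$ forces $q>2n/(n-1)$, while testing against a Knapp cap $F=\mathbf 1_\theta$ adapted to the $n-1$ curved directions of the cone and optimizing in the cap size forces $(n+1)/q\le(n-1)/p'$. The content of the conjecture is that these two necessary conditions are also sufficient.

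First I would localize. Using a Littlewood--Paley decomposition together with the scaling $(\tau,\xi)\mapsto(\lambda\tau,\lambda\xi)$ --- which maps the cone to itself and multiplies $\mathrm d\sigma$ by a fixed power of $\lambda$ --- one reduces \eqref{est:res} to a uniform single-scale estimate for $F$ supported on the piece $|\xi|\sim 1$ of the cone, the frequency pieces then being reassembled by a square-function argument. On such a piece the cone is, up to smooth diffeomorphisms, a bounded union of graphs with exactly one vanishing principal curvature. The endpoint $p=2$, namely $q=2(n+1)/(n-1)$, now follows from the $TT^*$ method and the decay bound above --- this is the Stein--Tomas argument for the cone, equivalently the sharp $L^2$ cone (Strichartz) estimate --- and interpolating with the trivial $L^1\to L^\infty$ bound yields the full conjectured region for all $q\ge 2(n+1)/(n-1)$.

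To reach exponents $q<2(n+1)/(n-1)$ I would pass to a bilinear formulation: partition the annular piece of the cone into $\sim R^{1/2}$ angular sectors of width $R^{-1/2}$, expand $(F\mathrm d\sigma)^\vee$ as a double sum over pairs of sectors, use a Whitney decomposition to separate near-parallel pairs from transverse ones, and on the transverse pairs invoke the sharp bilinear cone restriction estimate --- Wolff's bilinear cone theorem, as refined by Tao --- whose gain comes from the genuine transversality of the two families of wave packets involved. Feeding this into the Tao--Vargas--Vega bilinear-to-linear conversion, together with Tao's $\eps$-removal lemma, upgrades the bilinear gain into a linear restriction estimate; for the cone this machinery in fact settles the conjecture in dimensions $n\le 3$ (the planar case $n=2$ being classical, after Barcel\'o), and in higher dimensions one can push further using multilinear Bennett--Carbery--Tao estimates or polynomial partitioning in the style of Guth. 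The bilinear cone estimate itself I would establish by induction on scales: a wave-packet decomposition at scale $R$, an $L^2$-orthogonality and overlap count exploiting the transversality of tubes from the two sectors, and a pigeonhole-and-rescale step passing from scale $R$ to scale $R^{1/2}$.

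The main obstacle is precisely the residual gap in dimensions $n\ge 4$: bilinear, and even $k$-linear, information is strictly weaker than the linear restriction conjecture, and no currently known decomposition recovers the missing exponents down to $q>2n/(n-1)$, which is why Conjecture~\ref{conj} remains open there. In the conical singular space studied in the present paper there is the further difficulty that $\mathcal L_V$ is not diagonalized by the Euclidean Fourier transform, so even the $L^2$-based half of the argument must be rebuilt from the spectral measure of $\mathcal L_V$ and Hankel-type transforms on the cross section $Y$, with the bottom eigenvalue of $\Delta_h+V_0+(n-2)^2/4$ governing the available decay; this loss is presumably why the authors prove only a \emph{modified} version of \eqref{est:res} in their setting.
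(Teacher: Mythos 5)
You have not proved Conjecture~\ref{conj}, and neither does the paper: the statement is posed there precisely as an open conjecture, with no proof offered beyond citing the literature (Barcelo for $n=2$, Wolff for $n=3$, Ou--Wang for $n=4$ via Guth's polynomial partitioning, open for $n\geq 5$). So there is no proof in the paper to compare against, and your text is correctly framed as a survey of the known route rather than an argument. Within that framing it is accurate on the settled parts: the necessity of \eqref{condition} from the decay of $(\mathrm{d}\sigma)^\vee$ and the Knapp example, the localization to a single dyadic scale, the Stein--Tomas/$TT^*$ endpoint $q=2(n+1)/(n-1)$ with interpolation filling in $q\geq 2(n+1)/(n-1)$, and the bilinear (Wolff, Tao--Vargas--Vega, $\eps$-removal) and multilinear/polynomial-partitioning improvements below that exponent. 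These steps genuinely yield only partial ranges of $q$ above $2n/(n-1)$ in higher dimensions; the sufficiency of \eqref{condition} in full is exactly what remains unknown, so no completion of your sketch is currently possible.

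Two small corrections. First, your closing claim that the ``residual gap'' is in dimensions $n\geq 4$ is inconsistent with the paper, which records that Ou and Wang resolved the cone restriction conjecture for $n=4$ (by the polynomial partitioning method you mention only in passing); the conjecture is open for $n\geq 5$. Second, be careful attributing $n=3$ to the bilinear machinery alone: Wolff's sharp bilinear cone estimate combined with the bilinear-to-linear conversion is indeed how that case is handled, but the genuinely new input there is the bilinear theorem itself, not the conversion. Finally, your last paragraph about the conical singular space, the spectral measure of $\mathcal{L}_V$, Hankel transforms, and the role of $\nu_0$ is a fair description of why the authors prove only the modified mixed-norm estimate of Theorem~\ref{thm:main}, but it is not part of Conjecture~\ref{conj}, which is a statement about the Euclidean cone.
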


This conjecture is a great challenge and has attracted many mathematicians' attention. This conjecture has been proved to hold true  by Barcelo \cite{Barcelo}  for $n=2$ and Wolff \cite{Wolff} for $n=3$. Very recently, by using the method of polynomial partitioning developed by Guth\cite{Guth1,Guth2}, Ou and Wang\cite{OW} solved  the cone restriction conjecture for $n=4$ and made some new progress for the conjecture in the higher dimensions. The conjecture is so challenging that it remains open when $n\geq 5$. For recent work, see \cite{De,Tao} for more details on process of restriction estimate. \vspace{0.2cm}

It is known that the restriction problem on cone is closely related to the wave equation, e.g. see Tao \cite{Tao}.
Let $u$ be the solution of the wave equation
\begin{align}\label{equ:w}
\begin{cases}
&(\partial^2_t-\Delta)u(t,x)=0\quad\quad (t,x)\in\R\times \R^n\\
&u(0,x)=0, u_t(0,x)=f(x),\quad   x\in \R^n,
\end{cases}
\end{align}
then the solution
\begin{equation}
u(t,x)=\frac{\sin(t\sqrt{-\Delta})}{\sqrt{-\Delta}}f=\sum_{\pm}\int_{\R^n}\hat{f}(\xi)e^{2\pi i(x\cdot\xi\pm t|\xi|)}\frac{\mathrm{d}\xi}{|\xi|}.
\end{equation}
Take $F=\hat{f}$ in \eqref{F-vee}, then the inverse space-time Fourier transform of the measure $F\mathrm{d}\sigma$ equals each half-wave
\begin{equation}
(F\mathrm{d}\sigma)^\vee(\pm t,x)=\int_{\R^n}\hat{f}(\xi)e^{2\pi i(x\cdot\xi\pm t|\xi|)}\frac{\mathrm{d}\xi}{|\xi|}.
\end{equation}
The restriction problem for the wave equation is to find the optimal range of $p$ and $q$ satisfying the estimate
\begin{equation}\label{est:res-w}
\|u(t,x)\|_{L^q_{t,x}(\R\times\R^n)}\leq C_{p,q,n}\|\hat{f}\|_{L^p_{1/|\xi|}(\R^n)}.
\end{equation}
Since the support of $\hat{f}$ may not supported in a compact set, that is the above $S$ is the whole cone instead of a compact subset of the cone,
the necessary conditions \eqref{condition} are strengthened to

\begin{equation}\label{condition'}
q>\frac{2n}{n-1}\quad \text{and}\quad \frac{n+1}{q}=\frac{n-1}{p'}.
\end{equation}

The version of Conjecture \ref{conj} for wave equation can be stated as

\begin{conjecture}\label{conj-w}
The estimate ~\eqref{est:res-w}~ is true if and only if \eqref{condition'} hold.
\end{conjecture}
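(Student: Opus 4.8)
The plan is to treat the two implications in Conjecture~\ref{conj-w} separately. The necessity of \eqref{condition'} is the routine half: assuming \eqref{est:res-w}, one tests it against two standard families of data. Choosing $\hat f$ a smooth bump adapted to a unit ball bounded away from the origin and exploiting the stationary-phase decay $|(\mathrm{d}\sigma)^\vee(t,x)|\lesssim (1+|t|+|x|)^{-(n-1)/2}$ in a neighbourhood of the light cone forces $q>\tfrac{2n}{n-1}$, with the usual logarithmic failure at the endpoint. Taking instead the Knapp datum, with $\hat f$ concentrated in a slab of dimensions $\delta\times\delta^{1/2}\times\cdots\times\delta^{1/2}$ tangent to the cone, produces the scaling line $\tfrac{n+1}{q}\leq\tfrac{n-1}{p'}$; and since here $S$ is the \emph{entire} cone, which is invariant under the parabolic dilations $(\tau,\xi)\mapsto(\lambda\tau,\lambda\xi)$, sending $\lambda\to 0$ and $\lambda\to\infty$ upgrades this inequality to the equality in \eqref{condition'}. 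All of this is essentially in \cite{Stein,Tao}.

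The sufficiency direction is the substantive content, and here I would run the modern restriction machinery. By $TT^*$, H\"{o}lder's inequality and interpolation with the trivial $L^1(S,\mathrm{d}\sigma)\to L^\infty$ bound it suffices to prove the estimate at the critical exponent, and then Tao's $\varepsilon$-removal lemma (see \cite{Tao}) reduces the global bound \eqref{est:res-w} to its local-in-scale form $\|(F\mathrm{d}\sigma)^\vee\|_{L^q(B_R)}\lesssim_\varepsilon R^\varepsilon\|F\|_{L^2(S,\mathrm{d}\sigma)}$ for $F$ supported on a truncated subcone. On $B_R$ one carries out a broad--narrow decomposition: the narrow part, where $\mathrm{supp}\,F$ clusters near a proper subcone, is handled by parabolic rescaling and an induction on $R$, while the broad part is treated by polynomial partitioning in the style of Guth \cite{Guth1,Guth2} and Ou--Wang \cite{OW} --- selecting a bounded-degree polynomial whose zero set bisects the mass, recursing in the cells, and controlling the wall term via the transversality furnished by the broad condition together with the best available bilinear and multilinear cone estimates.

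The principal obstacle is precisely what keeps the conjecture open for $n\geq 5$: the bilinear and multilinear cone inequalities that one can establish unconditionally are not sharp enough to drive the polynomial-partitioning induction down to the conjectured range, so the broad part closes only for a restricted set of exponents --- this is the range \cite{OW} completes for $n=4$ and improves (without settling) for $n\geq 5$. For $n=2$ the bound follows from Barcelo's work \cite{Barcelo} and for $n=3$ from Wolff's sharp bilinear cone theorem \cite{Wolff}, in each case combined with $\varepsilon$-removal to pass to the global form \eqref{est:res-w}; thus the genuine difficulty is concentrated in high dimensions and I do not expect to resolve it here. The relevance for the present paper is that on the conical space $X=C(Y)$ there is no ambient space-time Fourier transform with which to even phrase \eqref{est:res-w}, which is why we instead prove a \emph{modified} restriction estimate adapted to the spectral resolution of $\mathcal{L}_V$, in which the lowest eigenvalue of $\Delta_h+V_0+(n-2)^2/4$ enters as the critical parameter.
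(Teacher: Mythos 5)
You have correctly identified that the statement labelled Conjecture~\ref{conj-w} is exactly that --- a conjecture --- and the paper contains no proof of it: it is stated as the wave-equation analogue of the cone restriction conjecture, recorded as known for $n=2$ \cite{Barcelo}, $n=3$ \cite{Wolff}, $n=4$ \cite{OW}, and open for $n\geq 5$, and the body of the paper then proves only the \emph{modified} estimate \eqref{est:restriction} of Theorem~\ref{thm:main} in the mixed norm $L^q_{\mathrm{rad}}(L^2_{\mathrm{sph}})$, via localized Hankel-transform estimates rather than any spacetime Fourier/partitioning machinery. So there is no ``paper's own proof'' to compare against, and your write-up, honestly, is not a proof either: the necessity half (decay of $(\mathrm{d}\sigma)^\vee$, Knapp example, plus the dilation argument upgrading the inequality to the equality in \eqref{condition'} because $S$ is the full cone) is standard and fine, but the sufficiency half is a survey of the broad--narrow/polynomial-partitioning strategy that, as you yourself state, closes only in the ranges already achieved in \cite{Guth1,Guth2,OW} and does not reach the conjectured exponents for $n\geq 5$.

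The concrete gap is therefore in the sufficiency direction: the claim ``\eqref{condition'} implies \eqref{est:res-w}'' is precisely the open problem, and the bilinear/multilinear cone inputs available to the partitioning induction are not strong enough to yield it; no amount of $\varepsilon$-removal or $TT^*$ reduction repairs this, since those steps only reformulate the critical estimate, they do not supply it. Your final remark is the right takeaway and matches the paper's stance: on $X=C(Y)$ one cannot even phrase \eqref{est:res-w} by spacetime Fourier analysis, which is why the paper replaces the conjecture by the spectrally defined estimate \eqref{est:restriction}, proved by Bessel-function asymptotics and stationary phase, with the lowest eigenvalue $\nu_0^2$ of $\Delta_h+V_0+(n-2)^2/4$ entering through the additional necessary condition \eqref{add-cond}. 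If anything from your proposal is to be salvaged for the paper's purposes, it is only the necessity computation, which has a genuine analogue there (the counterexample in Section~4 showing \eqref{add-cond} is needed), not the partitioning program.
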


The above problems and conjectures are proposed to be associated with constant coefficient operator in the Euclidean space.
It is natural to ask analogous problems for the wave equation in a curve space or when there is a potential term in the equation.
In particular $p=2$, the inequality \eqref{est:res-w} is known as the Strichartz type estimate for wave equation.
There has been a lot of interest in developing Strichartz estimates on manifolds or when there is a potential term in the equation, both for the Schr\"odinger and wave equations;
this is too vast and highly active field to summarize here, but we refer to a very small and incomplete sample of recent results \cite{BPSS, MT, RS, HTW, ST}.
The restriction theory on manifolds arises in the study of eigenfunctions and the spectral measure of the Laplacian, for example, see Sogge \cite{sogge} on compact manifold and Guillarmou-Hassell-Sikora\cite{GHS} on asymptotically conic manifold.\vspace{0.2cm}

However, for general manifolds and $p\neq 2$, there is little result and the restriction theory is less satisfactory. Due to the geometry of the space and the spectrum of the operator, the results for the variable coefficient operator may be very different from the constant coefficient operator. For instance, one can not expect all the results from the Euclidean theory to carry over to curved space (e.g. see \cite{MS}). In this paper, we aim to prove a modified adjoint restriction inequality of \eqref{est:res-w} for the solution of \eqref{equ:w} in a conical singular space
$X$.\vspace{0.2cm}

Before stating our main result, we set up our model. Our setting is the metric cone $X$ which is a simple conical singular space as studied in
\cite{CT1,CT2, HL, MSe}. The conical space $(X,g)$ is given by the product space $X=C(Y)=(0,\infty)_r\times Y$ and the metric $g=dr^2+r^2h$ where $(Y,h)$ is a $(n-1)$-dimensional closed Riemannian manifold. A simplest example of a metric cone is the Euclidean space $\R^n$ when cross section $Y=\mathbb{S}^{n-1}$ and $h=d\theta^2$ the standard round metric on sphere. We stress that $X$ is more general and different from the Euclidean space. The space $X$ has an isolated conic singularity at cone tip $r=0$ except in the special case of Euclidean space.   The space $X$ does not have rotation symmetry and possibly has conjugate points due to the generality of $(Y,h)$
which bring many difficulties in the study of Strichartz estimates in \cite{ZZ1,ZZ2}. \vspace{0.2cm}

In this paper, as following the program in \cite{HL, Zhang, ZZ1,ZZ2}, we consider the Schr\"odinger operator
\begin{equation}\label{oper}
\mathcal{L}_V=\Delta_g+V
\end{equation}
where $\Delta_g$ is the Friedrichs extension of positive Laplace-Beltrami from the domain $\mathcal{C}_c^\infty(X^\circ),$ compactly supported smooth functions on the interior of the metric cone, and the potential $V=V_0(\theta)r^{-2}$ with $V_0(\theta)\in\mathcal{C}^\infty(Y)$ being such that the operator $\Delta_h+V_0+(n-2)^2/4$ is a strictly positive operator on $L^2(Y)$. There are many works studied this operator from different viewpoints. For example, wave diffraction phenomenon has been extensively studied in \cite{CT1,CT2};
 Riesz transform and heat kernel has been considered in \cite{HL, Li1, Li2, Mooer}. In the study of the regularity of wave propagator, Li \cite{Li3} and  M\"uller-Seeger  \cite{MSe} proved the $L^p$ regularity estimate; the Strichartz estimates were proved by Blair-Ford-Marzuola \cite{Ford, BFM} on flat cone $C(\mathbb{S}^1_\rho)$ and then were generalized
 by the last two authors in \cite{ZZ1,ZZ2}.\vspace{0.2cm}

Now, we state our main results. First, we study the adjoint restriction estimate for the solution of the wave equation
\begin{align}\label{equ:Lw}
\begin{cases}
&(\partial^2_t+\LL_V)u(t,r,\theta)=0,\quad\quad (t, r,\theta)\in\R\times X,\\
&u(0,r,\theta)=0, u_t(0,r,\theta)=f(r,\theta),\quad   (r,\theta)\in X.
\end{cases}
\end{align}
 More precisely, we prove

\begin{theorem}\label{thm:main}
 Let $n\geq2$ and $X$ be a n-dimensional metric cone, and let $\LL_V=\Delta_g+V$ where $r^2V=:V_0\in\mathcal{C}^\infty(Y)$ such that $\Delta_h+V_0(\theta)+(n-2)^2/4$ is a strictly positive operator on $L^2(Y)$ and its smallest
eigenvalue is $\nu_0^2$ with $\nu_0>0$. Suppose
 $f$ to be any Schwartz function and $u$ to be the solution of \eqref{equ:Lw}.

 $\bullet$ If $\nu_0\geq \tfrac{n-2}2$, and $(q,p)$ satisfies \eqref{condition'},
 then there exists a constant $C$ only depending on $p,q,n$ and $X$ such that
\begin{equation}\label{est:restriction}
\|u(t,r,\theta)\|_{L^q_{t}(\R;L^q_{\mathrm{rad}}(L^2_\mathrm{sph}))}\leq C_{p,q,n,X}\|\rho^{-\frac{1}{p}}\hat{f}(\rho,\omega)\|_{L^p_{\mathrm{rad}}(L^2_\mathrm{sph})},
\end{equation}
where $L^q_{\mathrm{rad}}(L^2_\mathrm{sph})=L^q_{r^{n-1}dr}((0,\infty);L^{2}_\theta(Y))$ and $\hat{f}$ denotes the distorted Fourier transform defined in \eqref{Fourier} below.

 $\bullet$ If $0<\nu_0< \tfrac{n-2}2$, and $(q,p)$ satisfies \eqref{condition'} and
 \begin{equation}\label{add-cond}
  q<\frac{2n}{n-2-2\nu_0}
 \end{equation}
 then \eqref{est:restriction} holds true. The additional requirement \eqref{add-cond} is necessary.

\end{theorem}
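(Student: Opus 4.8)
The natural strategy is to pass from the wave equation on the cone $X$ to a problem about a one-dimensional operator via separation of variables on the cross section $Y$. Writing $f(r,\theta)=\sum_j f_j(r)\,\varphi_j(\theta)$, where $\{\varphi_j\}$ is an orthonormal basis of $L^2(Y)$ consisting of eigenfunctions of $\Delta_h+V_0+(n-2)^2/4$ with eigenvalues $\nu_j^2$ (so $\nu_0$ is the smallest), each component evolves under a Bessel-type radial operator, and the solution operator $\frac{\sin(t\sqrt{\LL_V})}{\sqrt{\LL_V}}$ acts diagonally with Hankel-transform kernels involving $J_{\nu_j}$. The distorted Fourier transform in \eqref{Fourier} is precisely the tool that diagonalizes $\LL_V$ in this way. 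Since the $L^2_\mathrm{sph}$ norm on both sides of \eqref{est:restriction} is an $\ell^2$ sum over $j$, the first step I would carry out is to reduce the theorem to a \emph{uniform-in-$\nu$} one-dimensional estimate: for each $\nu\ge\nu_0$, prove
\begin{equation*}
\Bigl\|\int_0^\infty e^{\pm i t\rho}\,(\rho r)^{-\frac{n-2}2}J_\nu(\rho r)\,g(\rho)\,\rho\,\mathrm{d}\rho\Bigr\|_{L^q_t L^q_{r^{n-1}dr}} \le C\,\|\rho^{-\frac1p}g(\rho)\|_{L^p_{r^{n-1}dr}},
\end{equation*}
with $C$ independent of $\nu$, and then sum in $j$ by Minkowski's inequality (which is available precisely because $q\ge 2$, part of \eqref{condition'}). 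The Euclidean case $\nu=\tfrac{n-2}2$ recovers the classical cone restriction estimate \eqref{est:res-w} for radial-in-$\xi$ data, so the content is the uniformity as $\nu\to\infty$ and the behavior for $\nu_0<\tfrac{n-2}2$.

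For the uniform estimate I would follow the standard harmonic-analysis route: dyadically decompose in frequency $\rho\sim 2^k$, rescale to $\rho\sim 1$, and estimate the resulting oscillatory integral operator. The key input is pointwise control of the kernel
\[
K_\nu^{\pm}(t,r) = \int_0^\infty e^{\pm i t\rho}\,\beta(\rho)\,(\rho r)^{-\frac{n-2}2}J_\nu(\rho r)\,\rho\,\mathrm{d}\rho,
\]
for which one uses the classical asymptotics of $J_\nu$: the bound $J_\nu(s)\lesssim s^\nu$ for $s\lesssim\nu$ (or $s\lesssim 1$), and $J_\nu(s)\lesssim s^{-1/2}$ uniformly for $s\gtrsim 1$ together with the oscillation $e^{\pm is}$ in the large-argument regime. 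In the regime $r\gtrsim 1$ the operator behaves like the Euclidean half-wave propagator (with a stationary-phase gain giving the right dispersive decay), and the Stein--Tomas/Strichartz machinery plus the endpoint line $\frac{n+1}{q}=\frac{n-1}{p'}$ yields \eqref{est:restriction} with a $\nu$-independent constant, since the $J_\nu$ asymptotics are uniform in $\nu$ there. In the regime $r\lesssim 1$, and more precisely $\rho r\lesssim \nu$, the factor $(\rho r)^{-\frac{n-2}2}J_\nu(\rho r)\sim (\rho r)^{\nu-\frac{n-2}2}$ is \emph{small} when $\nu\ge\tfrac{n-2}2$, which is exactly why no loss occurs and why the first bullet needs no extra hypothesis; the weight $r^{n-1}dr$ then makes the near-tip contribution integrable for $q<\frac{2n}{n-2-2\nu_0}$, and this is where the constraint \eqref{add-cond} enters when $\nu_0<\tfrac{n-2}2$.

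The main obstacle is the second bullet, i.e.\ the small-$\nu$ range $0<\nu_0<\tfrac{n-2}2$, where $(\rho r)^{-\frac{n-2}2}J_{\nu_0}(\rho r)$ \emph{blows up} like $r^{\nu_0-\frac{n-2}2}\to\infty$ as $r\to 0$. One must show that this singularity is exactly compensated by the measure $r^{n-1}dr$ in the target norm $L^q_{r^{n-1}dr}$: the local integrability near $r=0$ requires $q(\tfrac{n-2}2-\nu_0)<n$, which is \eqref{add-cond}, and the necessity of \eqref{add-cond} follows by testing \eqref{est:restriction} on data concentrated so that only the $\nu_0$-component survives and tracking the singular profile near the tip — this produces a divergent left-hand side when \eqref{add-cond} fails. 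The delicate point is to prove sufficiency with a uniform constant: one needs a Littlewood--Paley argument in $r$ (not just in $\rho$) near the tip, controlling the contribution of each dyadic annulus $r\sim 2^{-m}$, $m\to\infty$, and summing a geometric series whose ratio is governed by the gap $\tfrac{n-2}2-\nu_0$ versus $\tfrac nq$; the condition \eqref{add-cond} is precisely what makes this series converge. I would also record separately that the higher components $\nu_j$ with $\nu_j\ge\tfrac{n-2}2$ contribute no extra restriction, so the worst case is genuinely the bottom eigenvalue $\nu_0$, and the summation in $j$ is harmless because the large-$\nu_j$ kernels only improve.
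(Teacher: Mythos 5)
Your overall skeleton (separation of variables into the modes $Y_{\nu,\ell}$, Hankel-transform representation of the half-wave, Bessel asymptotics, dyadic decomposition, the small-$r$ mechanism $r^{\nu_0-\frac{n-2}2}$ versus the weight $r^{n-1}dr$ giving \eqref{add-cond}, and a counterexample built from the $\nu_0$-mode for necessity) agrees with the paper, but two steps as you state them would fail. First, the reduction to a uniform-in-$\nu$ scalar estimate followed by ``summing in $j$ by Minkowski since $q\geq 2$'' does not recover the theorem on the whole sharp line $\frac{n+1}{q}=\frac{n-1}{p'}$. Minkowski with $q\geq 2$ lets you pull the $\ell^2_j$ sum out of $L^q_{t,r}$ on the left, but after applying the scalar estimates you are left with $\bigl(\sum_j\|g_j\|_{L^p}^2\bigr)^{1/2}$ on the right, and returning to $\bigl\|(\sum_j|g_j|^2)^{1/2}\bigr\|_{L^p}$ requires $p\leq 2$; for $p>2$ the inequality goes the wrong way (test $g_j$ with disjoint supports). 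On the sharp line one has $p>2$ exactly when $\frac{2n}{n-1}<q<\frac{2(n+1)}{n-1}$, so your reduction only yields the theorem for $q\geq\frac{2(n+1)}{n-1}$. The paper avoids this by proving the localized Hankel-transform estimates directly in vector-valued (square-function) form with $L^2_\rho(I)$ data (Propositions \ref{LRE} and \ref{LRE'}), and converting $L^2_\rho(I)$ to $L^p_\rho(I)$ only at unit frequency scale by H\"older when $p\geq 2$, so the $\ell^2_j$ sum is never interchanged with an $L^p$ norm for $p>2$.

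Second, the pointwise input you invoke, $|J_\nu(s)|\lesssim s^{-1/2}$ uniformly for $s\gtrsim 1$, is false uniformly in $\nu$: near the turning point $s\sim\nu$ one only has $|J_\nu(s)|\lesssim \nu^{-1/3}$, and indeed $|J_\nu(\nu)|\sim\nu^{-1/3}\gg\nu^{-1/2}$. Hence the ``behaves like the Euclidean half-wave, apply Stein--Tomas/Strichartz with a $\nu$-independent constant'' step is unsupported as written (and Stein--Tomas by itself does not give the $p<2$ portion of the sharp line either; the paper treats $1\leq p\leq 2$ by interpolating an $L^1_\rho$-based bound with the $L^2_\rho$-based one). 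The paper's substitute for your dispersive heuristic is: the uniform bounds $|J_\nu(r)|\lesssim r^{-1/3}$ and $\int_R^{2R}|J_\nu(r)|^2\,\mathrm{d}r\leq C$, plus the Schl\"afli representation with a Van der Corput argument and a Fourier-series (almost orthogonality in $\rho$) trick, which give an $L^\infty_tL^\infty_r$ estimate with gain $R^{-1/2}$ against $L^2_\rho$ data; interpolation with the $L^2_tL^2_r$ bound and a Schur-type summation over the dyadic parameters $R,M$ (convergent under \eqref{condition'}, and under \eqref{add-cond} when $0<\nu_0<\tfrac{n-2}2$) then completes the sufficiency part, while your necessity argument matches the paper's counterexample $f=\mathcal{H}_{\nu_0}\chi$.
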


\begin{remark} The admissible pair $(q,p)$ here is almost the same as stated in Conjecture \ref{conj-w} when $\nu_0\geq (n-2)/2$. However, from the additional necessary condition \eqref{add-cond}, the smallest eigenvalue of $\Delta_h+V_0(\theta)+(n-2)^2/4$ plays an important role.
\end{remark}

\begin{remark}\label{rem:stri} In particular $p=2$, from \eqref{est:restriction}, we obtain the Strichartz estimate
\begin{equation}\label{est:stri}
\|u(t,r,\theta)\|_{L^{\frac{2(n+1)}{n-1}}_{t}(\R;L^{\frac{2(n+1)}{n-1}}_{\mathrm{rad}}(L^2_\mathrm{sph}))}\leq C\|f\|_{\dot{H}^{-\frac12}(X)}
\end{equation}
which is weaker than the Strichartz estimate proved in \cite{ZZ2}. However, from \eqref{est:stri''} below, we can prove
\begin{equation}\label{est:stri'}
\|u(t,r,\theta)\|_{L^q_{t}(\R;L^{q}_{\mathrm{rad}}(L^2_\mathrm{sph}))}\leq C M^{\frac{n-1}2-\frac{n+1}q}\|f\|_{\dot{H}^{-\frac12}(X)}, \quad q>\tfrac{2n}{n-1}
\end{equation}
provided $\text{supp} (\hat{f})\subset\{\rho\sim M\}$, and one needs the restriction $q<\tfrac{2n}{n-2-2\nu_0}$ when $0<\nu_0<\tfrac{n-2}2.$
This includes some new Strichartz estimates since one can choose $q$ to be out of the admissible assumption in \cite{ZZ2}.

\end{remark}

\begin{remark}
Compared with \cite{Zhang}, in which the second author studied the restriction estimate for Schr\"odinger equation associated with $\mathcal{L}_V$,
here we remove the positive assumption on the potential and improve the loss of angular regularity.
\end{remark}

\begin{remark}
The conjugate points do not effect the estimates \eqref{est:restriction} due to the mixed space $L^q_{\mathrm{rad}}(L^2_\mathrm{sph})$.
It would be interesting to investigate the implicit influence of conjugate point by establishing \eqref{est:res-w}.
\end{remark}

We stress that the result does not solve the challenging Conjecture \ref{conj-w} in the conical singular space since we use a mixed space $L^q_{\mathrm{rad}}(L^2_\mathrm{sph})$.  The modified norm in \eqref{est:restriction} is motivated to simplify the Conjecture \ref{conj-w} from two aspects: the wavelet tubes overlap in the angular
direction and the parametrix of the wave propagator. \vspace{0.2cm}

The modified norm $L^q_{\mathrm{rad}}(L^2_\mathrm{sph}(\mathbb{S}^{n-1}))$ has been used in many famous harmonic analysis problems (such as Fourier
restriction estimates, local smoothing conjecture  etc.) on Euclidean space $\R^n$, we refer the reader to \cite{ Cor, CG, CRS, GS, Moc, Shao}. Miao and the last two authors \cite{MZZ, MZZ1} proved the restriction estimates for wave and Schr\"odinger equation when the initial data has additional angular regularity.
And later, C\'ordoba-Latorre \cite{CL1} revisited some classical conjectures including restriction estimate in harmonic analysis in the mixed space $L^q_{\mathrm{rad}}(L^2_\mathrm{sph})$.  \vspace{0.2cm}

In the conical singular space $X$,
the spacetime Fourier transform is no longer so useful as well as in Euclidean space, and so restriction theory is harder to be established;
however the spatial Fourier transform can be replaced by the spectral decomposition of the Laplacian,
some techniques used in that theory still do apply, for instance the $TT^*$-method which was used in \cite{GHS} to prove Stein-Tomas type restriction estimates and in \cite{HZ, ZZ1, ZZ2} to prove Strichartz estimates. The  $TT^*$-strategy is a key point in those papers to use the approximate microlocalized parametrix for the fundamental solution
 which is more complicated than the Euclidean's due to the possibility of appearing conjugate points in the space. But if one aims to establish
\eqref{est:res-w} when $p\neq 2$, the $TT^*$-method breaks down. Instead of using the microlocalized parametrix constructed in \cite{ZZ1, ZZ2},
we will use the method of Cheeger-Taylor \cite{CT1,CT2}, even  though the method leads to a loss of angular regularity.
The method of Cheeger-Taylor has been used by  M\"uller-Seeger \cite{MSe} to establish local smoothing estimates in the mixed spacetime $L^p_{\mathrm{rad}}(L^2_\mathrm{sph})$ estimates
for wave equation in this conical singular space.
\vspace{0.2cm}

Our strategy of proving Theorem \ref{thm:main} is to establish the localized estimates for Hankel transform by analyzing Bessel function and using stationary phase argument. As an application of the proof, for independent interest, we will prove local energy estimates and Keel-Smith-Sogge estimates in our setting.

\begin{theorem}\label{thm:KSS}
Let ${\bf R}>0$ be a fixed number and let $u(t,r,\theta)$ be the solution of \eqref{equ:Lw} with initial data $f\in\dot{H}^{-1}$. Then the following results hold:

$\bullet$ Local energy decay estimate:
\begin{equation}\label{est:locendec}
\sup_{{\bf R}>0}{\bf R}^{-1/2}\|u(t,r,\theta)\|_{L^2(\R; L^2((0,{\bf R}]\times Y))} \lesssim \|f\|_{\dot{H}^{-1}}
\end{equation}

$\bullet$ Keel-Smith-Sogge estimate: let $\beta>0$
\begin{equation}\label{kss}
\|\langle r\rangle^{-\beta} u(t,r,\theta)\|_{L^2([0,T];L^2(X))}\leq C_\beta(T)\|f\|_{\dot{H}^{-1}},
\end{equation}
where
$$
C_\beta(T)=C\times
\begin{cases}
T^{\frac12-\beta},\quad &\text{if}\;\quad 0\leq \beta< \frac12,\\
(\log((2+T))^{\frac12},\quad  &\text{if}\;\quad \beta=\frac12,\\
1 \quad  &\text{if}\;\quad \beta>\frac12,
\end{cases}
$$
where $C$ is an absolute constant independent of $T$.

$\bullet$ Local smoothing estimate: let $0\leq \beta <\tfrac12$
\begin{equation}\label{kss2}
\big\| |x|^{-\beta} u(t,r,\theta) \big\|_{L^2([0,T];L^2(X))}\lesssim T^{\frac12-\beta}\|f\|_{\dot{H}^{-1}}.
\end{equation}

\end{theorem}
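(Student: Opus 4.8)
\medskip

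\noindent\emph{Proof plan for Theorem~\ref{thm:KSS}.}
The plan is to prove the local energy decay estimate \eqref{est:locendec} first, by reducing it through separation of variables and the Hankel transform to a single uniform estimate for Bessel functions, and then to deduce \eqref{kss} and \eqref{kss2} from it by a dyadic decomposition combined with conservation of the $\dot{H}^{-1}$-energy. To begin, set $g:=\LL_V^{-1/2}f$, so that $\|g\|_{L^2(X)}=\|f\|_{\dot{H}^{-1}}$ and $u=\frac{\sin(t\sqrt{\LL_V})}{\sqrt{\LL_V}}f=\sin(t\sqrt{\LL_V})g$; writing $\sin(t\sqrt{\LL_V})=\tfrac1{2i}\big(e^{it\sqrt{\LL_V}}-e^{-it\sqrt{\LL_V}}\big)$, it suffices to bound each half-wave $e^{\pm it\sqrt{\LL_V}}g$. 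Expanding $g=\sum_\ell g_\ell(r)\varphi_\ell(\theta)$ in an orthonormal basis $\{\varphi_\ell\}$ of $L^2(Y)$ of eigenfunctions of $\Delta_h+V_0+(n-2)^2/4$ with eigenvalues $\nu_\ell^2$, $\nu_\ell\ge\nu_0>0$, and noting that on the $\ell$-th angular sector $\LL_V$ acts as $A_{\nu_\ell}=-\partial_r^2-\frac{n-1}{r}\partial_r+\frac{\nu_\ell^2-(n-2)^2/4}{r^2}$, whose Friedrichs extension is diagonalised by the Hankel transform $\mathcal{H}_{\nu_\ell}$ of order $\nu_\ell$ (the distorted Fourier transform \eqref{Fourier}), and since the angular norm in \eqref{est:locendec} is an $L^2(Y)$-norm, Plancherel in $\theta$ reduces \eqref{est:locendec} to the uniform-in-$\nu$ bound
\[
{\bf R}^{-1/2}\big\|\chi_{(0,{\bf R}]}(r)\,e^{it\sqrt{A_\nu}}h\big\|_{L^2_t(\R;\,L^2(r^{n-1}dr))}\ \lesssim\ \|h\|_{L^2(r^{n-1}dr)},\qquad\nu\ge\nu_0 .
\]

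Plugging in $e^{it\sqrt{A_\nu}}h(r)=\int_0^\infty e^{it\rho}(r\rho)^{-\frac{n-2}{2}}J_\nu(r\rho)(\mathcal{H}_\nu h)(\rho)\rho^{n-1}d\rho$ and using Plancherel in $t$, Fubini and the substitution $s=r\rho$, the left side squared equals a constant multiple of $\int_0^\infty|(\mathcal{H}_\nu h)(\rho)|^2\rho^{n-1}\big[\rho^{-1}\int_0^{{\bf R}\rho}s\,|J_\nu(s)|^2\,ds\big]d\rho$, so by the Hankel--Plancherel identity the whole of \eqref{est:locendec} follows from the single uniform Bessel estimate
\begin{equation}\tag{$\ast$}
\int_0^{S}|J_\nu(s)|^2\,s\,ds\ \lesssim\ S\qquad\text{for all }S>0\text{ and all }\nu\ge\nu_0>0 .
\end{equation}
The estimate $(\ast)$ is the technical heart of the argument, and it is precisely the sort of Bessel-function analysis already carried out for Theorem~\ref{thm:main}. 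To prove it one may combine the Lommel formula $\int_0^S|J_\nu(s)|^2 s\,ds=\frac{S^2}{2}\big(J_\nu'(S)^2+(1-\nu^2/S^2)J_\nu(S)^2\big)$ with the elementary bound $|J_\nu(s)|\le(s/2)^\nu/\Gamma(\nu+1)$ (which settles $S\lesssim1$, as $\int_0^S|J_\nu|^2 s\,ds\lesssim_{\nu_0}S^{2\nu_0+2}\lesssim S$), reducing $(\ast)$ to the bound $S^2\big(J_\nu'(S)^2+(1-\nu^2/S^2)_+J_\nu(S)^2\big)\lesssim S$ for $S\gtrsim1$, which one then checks in the three regimes $S\ll\nu$ (where $J_\nu$ is exponentially small), $S\gtrsim\nu$ (where the Debye/WKB expansion gives $J_\nu'(S)^2+(1-\nu^2/S^2)J_\nu(S)^2\sim\sqrt{S^2-\nu^2}/S^2$, hence $S^2(\cdots)\lesssim\sqrt{S^2-\nu^2}\le S$), and the turning-point layer $|S-\nu|\lesssim\nu^{1/3}$ (where the Airy expansion gives $|J_\nu(S)|\lesssim\nu^{-1/3}$, $|J_\nu'(S)|\lesssim\nu^{-2/3}$, $(1-\nu^2/S^2)_+\lesssim\nu^{-2/3}$, hence $S^2(\cdots)\lesssim\nu^{2/3}\lesssim S$). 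All implicit constants depend only on $\nu_0$; in particular, unlike in Theorem~\ref{thm:main}, no condition relating $\nu_0$ to $n$ appears, which is why Theorem~\ref{thm:KSS} asks only $\nu_0>0$.

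For the passage from \eqref{est:locendec} to \eqref{kss} and \eqref{kss2}, combine \eqref{est:locendec} with the conserved $\dot{H}^{-1}$-energy of \eqref{equ:Lw}, $\|u_t(t)\|_{\dot{H}^{-1}}^2+\|u(t)\|_{L^2(X)}^2=\|f\|_{\dot{H}^{-1}}^2$ (valid because $u(0)=0$), which in particular gives $\sup_t\|u(t)\|_{L^2(X)}\le\|f\|_{\dot{H}^{-1}}$. Split $X$ into the dyadic shells $A_0=\{r\le1\}$ and $A_j=\{2^{j-1}<r\le2^j\}$, $j\ge1$; on each $A_j$ one has, on the one hand, $\|\chi_{A_j}u\|_{L^2([0,T]\times X)}^2\le\|\chi_{\{r\le 2^j\}}u\|_{L^2(\R\times X)}^2\lesssim2^j\|f\|_{\dot{H}^{-1}}^2$ by \eqref{est:locendec} with ${\bf R}=2^j$, and on the other $\|\chi_{A_j}u\|_{L^2([0,T]\times X)}^2\le T\sup_t\|u(t)\|_{L^2(X)}^2\le T\|f\|_{\dot{H}^{-1}}^2$, hence $\|\chi_{A_j}u\|_{L^2([0,T]\times X)}^2\lesssim\min(2^j,T)\|f\|_{\dot{H}^{-1}}^2$. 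Since $\langle r\rangle^{-\beta}\sim2^{-\beta j}$ on $A_j$, summing in $j$ gives $\|\langle r\rangle^{-\beta}u\|_{L^2([0,T]\times X)}^2\lesssim\|f\|_{\dot{H}^{-1}}^2\sum_{j\ge0}2^{-2\beta j}\min(2^j,T)$, and the elementary evaluation of this sum produces $T^{1-2\beta}$ for $0\le\beta<\tfrac12$, $\log(2+T)$ for $\beta=\tfrac12$, and $O(1)$ for $\beta>\tfrac12$ --- that is, \eqref{kss}. For \eqref{kss2}, note that $|x|=r$ on $X$, so the weight is $r^{-\beta}$: the region $\{r\ge1\}$ is controlled by \eqref{kss} with $0\le\beta<\tfrac12$, while on $\{r\le1\}$ one decomposes into the shells $\{2^{-j-1}<r\le2^{-j}\}$, $j\ge0$, applies \eqref{est:locendec} at the small scales ${\bf R}=2^{-j}$ (together with $\sup_t\|u(t)\|_{L^2}\le\|f\|_{\dot{H}^{-1}}$ when $T\le1$), and sums the series $\sum_{j\ge0}2^{2\beta j}\min(2^{-j},T)\lesssim T^{1-2\beta}$, which converges thanks to $\beta<\tfrac12$. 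The main difficulty throughout is the uniform Bessel estimate $(\ast)$, and within it the turning-point layer $S\sim\nu$; the spectral reduction of the first step and the dyadic bookkeeping of the last are routine.
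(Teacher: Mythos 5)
Your proposal is correct, and its overall architecture coincides with the paper's: prove the local energy decay \eqref{est:locendec} first, then deduce \eqref{kss} and \eqref{kss2} by dyadic decomposition in $r$ combined with the energy bound $\sup_t\|u(t)\|_{L^2(X)}\le\|f\|_{\dot{H}^{-1}}$; your $\min(2^j,T)$ bookkeeping is just a compressed version of the paper's case analysis ($T\le 1$ versus $T\ge 1$, $r\le T$ versus $r\ge T$). Where you genuinely differ is in the proof of \eqref{est:locendec} itself. The paper does not use Plancherel in $t$: it keeps the frequency-localized estimate \eqref{L-2} of Proposition \ref{LRE} (proved via Minkowski and Hausdorff--Young in $t$, with the Bessel inputs $\int_R^{2R}|J_\nu(r)|^2\,dr\le C$ and the small-argument bound $|J_\nu(r)|\lesssim r^{\nu}$), decomposes dyadically both in space ($R\le \mathbf{R}$) and in frequency ($M$), and sums using $\sum_{R\le \mathbf{R}}\min\{(MR)^{2(1+\nu_0)},RM\}\lesssim \mathbf{R}M$ before invoking orthogonality. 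You instead exploit that the norm is exactly $L^2$ in $t$, apply Plancherel in time on each angular mode, and reduce everything to the single uniform Bessel bound $\int_0^S s|J_\nu(s)|^2\,ds\lesssim S$ for $\nu\ge\nu_0$, verified via Lommel's formula and Debye/Airy asymptotics. This bound is equivalent to the dyadic inputs behind \eqref{L-2} (Lemma \ref{lem:bessel} summed over dyadic blocks), so the analytic content is the same, but your route avoids the frequency decomposition and any Hausdorff--Young step and is arguably cleaner at $q=2$; what it does not produce is the refined localized bound $\min\{R^{\nu_0+1},R^{1/2}\}$ of \eqref{L-2}, which the paper also feeds into Theorem \ref{thm:main} through Proposition \ref{LRE'}. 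Two small caveats: the turning-point bounds for the derivative ($|J_\nu'(S)|\lesssim \nu^{-2/3}$ for $|S-\nu|\lesssim\nu^{1/3}$) are standard but not contained in Lemma \ref{lem:bessel}, so either cite Watson or avoid the derivative entirely by integrating the pointwise bounds of Lemma \ref{lem:bessel} over the three regimes $s\le\nu/2$, $\nu/2\le s\le 2\nu$, $s\ge 2\nu$ (monotonicity of $S\mapsto\int_0^S$ then handles intermediate $S$); and in the sum $\sum_{j\ge0}2^{-2\beta j}\min(2^j,T)$ the tail over $2^j>T$ converges only for $\beta>0$, which matches the theorem's hypothesis (at $\beta=0$ the estimate is immediate from the energy bound, as in the paper).
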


\begin{remark}
If $\tfrac12<\beta<1+\nu_0$ with $\nu_0$ being given in Theorem \ref{thm:main}, a global-in-time local smoothing estimate \eqref{kss2} (that is, the constant is independent of $T$), has been proved by the last two authors in \cite{ZZ2}.
\end{remark}

\begin{remark} The Keel-Smith-Sogge (KSS) estimates were originally developed in \cite{KSS} to study the lifespan of solution of quasilinear wave equation.
We present the KSS estimates here for independent interests of studying the existence theory of the solution of nonlinear wave equation (e.g. Strauss conjecture and Glassey conjecture) in this setting.

\end{remark}

This paper is organized as follows: Section 2 gives some preliminaries including the spectral properties, Bessel function and Hankel transform.
In Section 3, we prove the key localized estimates of Hankel transform. The proof of Theorem \ref{thm:main} is presented in Section 4. Section 5 provides the proof of Theorem \ref{thm:KSS}.\vspace{0.2cm}

{\bf Acknowledgments:}\quad
The authors were supported by National Natural
Science Foundation of China (11771041, 11831004, 11901041,11671033) and H2020-MSCA-IF-2017(790623).
\vspace{0.2cm}




\section{Preliminaries}

In this section, we recall spectral and harmonic analysis results such as orthogonal decomposition of $L^2(Y)$, some basic properties about Hankel transform and Bessel function. In the end of this section, we introduce some notations.

\subsection{Spectral property of $\Delta_h+V_0(y)+(n-2)^2/4$} To study the operator $\LL_V$, we recall some spectral result of $\Delta_h+V_0(y)+(n-2)^2/4$, e.g. see  \cite{Wang, Zhang}.

Consider the operator in \eqref{oper}
\begin{equation}\label{oper'}
\LL_V=\Delta_g+\frac{V_0(\theta)}{r^2},
\end{equation}
on the metric cone $X=(0,\infty)_r\times Y.$ In coordinates $(r,\theta)\in\R_+\times Y$, $V_0(\theta)$ is a real continuous function and the metric $g$ takes the form
$$g=\mathrm{d}r^2+r^2h(\theta,\mathrm{d}\theta),$$ where  $h$ is the Riemannian metric  on $Y$ independent of $r$. Let $\Delta_h$ be the positive Laplace-Beltrami operator on $(Y,h)$ and  let $\nu_0^2$ be the smallest eigenvalue of the operator $\Delta_h+V_0(\theta)+(n-2)^2/4$, that is, for any $f\in L^2(Y)$, it holds \footnote{The assumption here
is weaker than the hypothesis in \cite{Zhang} where one needs $\Delta_h+V_0(\theta)\geq 0$.}
\begin{equation}\label{nu0}
\big\langle(\Delta_h+V_0(\theta)+(n-2)^2/4)f,f\big\rangle_{L^2(Y)}\geq \nu^2_0\|f\|^2_{L^2(Y)}.
\end{equation}
Let $\nu^2$ be one eigenvalue of the operator $\Delta_h+V_0(\theta)+(n-2)^2/4$ such that
\begin{equation}
(\Delta_h+V_0(\theta)+(n-2)^2/4) Y(\theta)=\nu^2 Y(\theta)
\end{equation}
where $Y(\theta)$ is an eigenfunction. Since $Y$ is a closed manifold, from the spectral theory, it is known that $\nu^2$ falls in a discrete
set, say $\{\nu_j^2\}_{j=0}^\infty$, and moreover $\nu^2_0<\nu^2_1<\cdots<\nu^2_j<\cdots \to \infty$.
Let $d(\nu_j)$ be the multiplicity of $\nu_j^2$ and let $\{Y_{\nu_j,\ell}(\theta)\}_{1\leq\ell\leq d(\nu_j)}$ be the corresponding eigenfunctions of $\Delta_h+V_0(\theta)+(n-2)^2/4$, that is
\begin{equation}\label{equ:eig}
\begin{split}
(\Delta_h+V_0(\theta)+(n-2)^2/4)Y_{\nu_j,\ell}(\theta)=\nu_j^2 Y_{\nu_j,\ell}(\theta),\\ \langle Y_{\nu_j,\ell},Y_{\nu_{j'},\ell'}\rangle_{L^2(Y)}=\delta_{j,j'}\delta_{\ell,\ell'}.
\end{split}
\end{equation}
where $\delta$ is the Kronecker delta function. In particular, when $Y=\mathbb{S}^{n-1}$ and $V_0=0$, $Y_{\nu_j,\ell}$ is spherical harmonics.
Define
\begin{equation}\label{Lam}
\Lambda_\infty=\big\{\nu_j>0: \nu_j^2\, \text{is the eigenvalue of }\, \Delta_h+V_0(\theta)+(n-2)^2/4\big\}_{j=0}^\infty.
\end{equation}
 From now  on, we drop the superscripts in $\nu_j$ for simple.
Define $$\mathcal{H}^\nu=\text{span}\{Y_{\nu,1},\cdots,Y_{\nu,d(\nu)}\},$$
then we have the orthogonal decomposition $$L^2(Y)=\bigoplus_{\nu\in\Lambda_\infty}\mathcal{H}^\nu.$$ Let $\pi_\nu$ denote the orthogonal projection:
$$\pi_\nu f=\sum_{\ell=1}^{d(\nu)}Y_{\nu,\ell}(\theta)\int_Yf(r,\omega)Y_{\nu,\ell}(\omega)\mathrm{d}\sigma_h,\quad f\in L^2(X),$$
where $\mathrm{d}\sigma_h$ is the measure on $Y$ under the metric $h$. For any $g\in L^2(X),$ we have the expansion formula
\begin{equation}\label{2.5}
g(r,\theta)=\sum_{\nu\in\Lambda_\infty}\pi_\nu g=\sum_{\nu\in\Lambda_\infty}\sum_{\ell=1}^{d(\nu)}a_{\nu,\ell}(r)Y_{\nu,\ell}(\theta)
\end{equation}
where $a_{\nu,\ell}(r)=\int_Yg(r,\theta)Y_{\nu,\ell}(\theta)\mathrm{d}\sigma_h.$ By orthogonality, it gives
\begin{equation}\label{orth}
\|g(r,\theta)\|^2_{L^2(Y)}=\sum_{\nu\in\Lambda_\infty}\sum_{\ell=1}^{d(\nu)}|a_{\nu,\ell}(r)|^2.
\end{equation}

\subsection{The Bessel Function and Hankel Transform}

For our purpose, we recall the Bessel function $J_\nu(r)$ of order $\nu$, which is defined by $$J_\nu(r)=\frac{(r/2)^\nu}{\Gamma(\nu+\frac{1}{2})\Gamma(1/2)}\int_{-1}^1e^{isr}(1-s^2)^{(2\nu-1)/2}\mathrm{d}s,$$
where $\nu>-\frac{1}{2}$ and $r>0.$ A simple computation gives the rough estimate
\begin{equation}\label{est:r}
|J_\nu(r)|\leq\frac{Cr^\nu}{2^\nu\Gamma(\nu+\frac{1}{2})\Gamma(\frac12)}\Big(1+\frac{1}{\nu+\frac12}\Big),
\end{equation}
where $C$ is an absolute constant independent of $r$ and $\nu$.

To investigate the behavior of asymptotic on $\nu$ and $r$, we recall Schl\"afli's integral representation \cite{Wolff} of the Bessel function: for $r\in\R^+$ and $\nu>-\tfrac12$
\begin{align}\label{SIR}
J_\nu(r)&=\frac{1}{2\pi}\int_{-\pi}^{\pi}e^{ir\sin\theta-i\nu\theta}\mathrm{d}\theta-\frac{\sin(\nu\pi)}{\pi}\int_0^\infty e^{-(r\sinh s+\nu s)}\mathrm{d}s \nonumber \\
&:=\tilde{J}_\nu(r)-E_\nu(r).
\end{align}
We remark that $E_\nu(r)=0$ when $\nu\in\mathbb{Z}^+.$ A simple computation gives that for $r>0$
\begin{equation}\label{est:E}
|E_\nu(r)|=\Big|\frac{\sin(\nu\pi)}{\pi}\int_0^\infty e^{-(r\sinh s+\nu s)}\mathrm{d}s\Big|\leq C(r+\nu)^{-1}.
\end{equation}
Next, we recall the properties of Bessel function $J_\nu(r)$ in \cite{SW, Watson}.
\begin{lemma}[Asymptotic of the Bessel function]\label{lem:bessel}
Assume $\nu\gg1.$ Let $J_\nu(r)$ be the Bessel function of order $\nu$ defined as above. Then there exist a large constant $C$ and a small constant $c$ independent of $\nu$ and $r$ such that:

$\bullet$ when $r\leq\frac{\nu}{2}$
\begin{equation}\label{2.12}
|J_\nu(r)|\leq Ce^{-c(\nu+r)};
\end{equation}

$\bullet$ when $\frac{\nu}{2}\leq r\leq2\nu$
\begin{equation}\label{2.13}
|J_\nu(r)|\leq C\nu^{-\frac{1}{3}}(\nu^{-\frac{1}{3}}|r-\nu|+1)^{-\frac{1}{4}};
\end{equation}

$\bullet$ when $r\geq2\nu$
\begin{equation}\label{2.14}
|J_\nu(r)|=r^{-\frac{1}{2}}\sum_{\pm}a_\pm(r,\nu)e^{\pm ir}+E(r,\nu),
\end{equation}
where $|a_\pm(r,\nu)|\leq C$ and $|E(r,\nu)|\leq Cr^{-1}.$

\end{lemma}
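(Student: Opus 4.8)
The plan is to split the range of $r$ into $r\le \nu/2$, $\nu/2\le r\le 2\nu$ and $r\ge 2\nu$, and in each piece reduce matters to a stationary phase analysis of the oscillatory integral in Schl\"afli's representation \eqref{SIR}, which I write as $J_\nu(r)=\tilde J_\nu(r)-E_\nu(r)$ with $\tilde J_\nu(r)=\tfrac1{2\pi}\int_{-\pi}^{\pi}e^{i\phi(\theta)}\,\mathrm{d}\theta$, phase $\phi(\theta)=r\sin\theta-\nu\theta$, and $\phi'(\theta)=r\cos\theta-\nu$, $\phi''(\theta)=-r\sin\theta$, $\phi'''(\theta)=-r\cos\theta$. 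The tail $E_\nu$ is already under control, $|E_\nu(r)|\le C(r+\nu)^{-1}$ by \eqref{est:E}, so in each case the real work is to estimate $\tilde J_\nu$ and then to check that the $E_\nu$ contribution is dominated by the claimed bound. When $r\le \nu/2$, however, \eqref{est:E} is far too weak, so there I would instead use the Poisson-type integral that defines $J_\nu$ (equivalently, its power series): bounding $|e^{isr}|\le 1$ and evaluating the resulting Beta integral gives $|J_\nu(r)|\le (r/2)^{\nu}/\Gamma(\nu+1)$, which is essentially the content of \eqref{est:r}. Since $\nu\gg1$, Stirling's formula yields $\Gamma(\nu+1)\ge c\sqrt\nu\,(\nu/e)^\nu$, hence $|J_\nu(r)|\le C\big(er/(2\nu)\big)^\nu\le C(e/4)^\nu\le Ce^{-c\nu}$ because $e/4<1$; as $r\le\nu/2$ this is $\le Ce^{-c'(\nu+r)}$, which is \eqref{2.12}.

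When $\nu/2\le r\le 2\nu$ we are at the turning point $r\approx\nu$, and this is the delicate case. Near $\theta=0$ one has $\phi(\theta)=(r-\nu)\theta-\tfrac r6\theta^{3}+O(r\theta^{5})$, so the inflection $\phi''(0)=0$, $\phi'''(0)=-r\ne0$ makes $\int_{|\theta|\lesssim1}e^{i\phi}$ an Airy-type integral at argument comparable to $(r-\nu)r^{-1/3}$. Applying van der Corput's lemma of order three, with $|\phi'''|\gtrsim r$ on a fixed neighborhood of $0$, already gives the contribution $\lesssim r^{-1/3}\sim\nu^{-1/3}$; to capture the extra decay when $|r-\nu|$ is large I would further decompose dyadically around the (possibly present and nearly merged) stationary point $\theta_0$ with $\cos\theta_0=\nu/r$, where $|\phi''(\theta_0)|=\sqrt{r^2-\nu^2}\sim\sqrt{\nu\,|r-\nu|}$, so the second-order van der Corput estimate there gives $\lesssim(\nu|r-\nu|)^{-1/4}$, and when $r<\nu$ there is no stationary point and $|\phi'|\ge|r-\nu|$ everywhere, so integration by parts gives $\lesssim|r-\nu|^{-1}$. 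On the region bounded away from $0$ (and from $\theta_0$) one has $|\phi'|\gtrsim\nu$, and integration by parts, including the endpoint terms at $\pm\pi$ where $|\phi'|=r+\nu\gtrsim\nu$, contributes $O(\nu^{-1})$; finally $|E_\nu(r)|\le C(r+\nu)^{-1}\le C\nu^{-1}$ is dominated by the Airy bound since $|r-\nu|\le\nu\ll\nu^3$. Collecting these pieces gives $|J_\nu(r)|\lesssim\nu^{-1/3}\big(1+\nu^{-1/3}|r-\nu|\big)^{-1/4}$, which is \eqref{2.13}. This middle range is the main obstacle: the estimate must interpolate correctly between the $\nu^{-1/3}$ Airy scaling for $|r-\nu|\lesssim\nu^{1/3}$ and the $\nu^{-1/4}|r-\nu|^{-1/4}$ oscillatory decay for $\nu^{1/3}\lesssim|r-\nu|\lesssim\nu$, which is precisely where the stationary point collides with the inflection of $\phi$; carrying this out by hand requires the multiscale decomposition with van der Corput estimates of the correct order in each piece indicated above, and it is cleanest to quote the classical treatments in \cite{Watson, SW}.

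When $r\ge 2\nu$ we have $\cos\theta_0=\nu/r\in[0,\tfrac12]$, so there are two non-degenerate stationary points $\pm\theta_0$ with $|\phi''(\pm\theta_0)|=\sqrt{r^2-\nu^2}\ge\tfrac{\sqrt3}{2}r$. Splitting $\tilde J_\nu$ by a smooth partition of unity into pieces near $\pm\theta_0$ and a non-stationary remainder, the classical (Debye) stationary phase expansion gives $\tilde J_\nu(r)=r^{-1/2}\sum_{\pm}a_\pm(r,\nu)e^{\pm ir}+O(r^{-3/2})$, where $a_\pm$ absorbs the bounded residual phase $e^{\pm i(\phi(\theta_0)\mp r)}$ and the amplitude $\big(r/\sqrt{r^2-\nu^2}\big)^{1/2}\in[1,\sqrt2]$, so $|a_\pm|\le C$; adding $|E_\nu(r)|\le C(r+\nu)^{-1}\le Cr^{-1}$ and absorbing the $O(r^{-3/2})$ remainder into it yields the representation \eqref{2.14} with $|E(r,\nu)|\le Cr^{-1}$. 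Throughout, the "constants independent of $\nu$ and $r$" come from the fact that all the Taylor remainders, Stirling estimates and stationary phase expansions are uniform once $\nu$ is large, and the three regimes are glued at $r=\nu/2$ and $r=2\nu$ where the corresponding bounds are mutually comparable.
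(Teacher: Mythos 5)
The paper offers no proof of this lemma: it is recalled as a classical fact from \cite{SW,Watson} (only the Schl\"afli representation \eqref{SIR} and the crude bound \eqref{est:E} are stated), so there is no internal argument to compare with line by line. Your sketch is a sound reconstruction of the standard proof and the estimates you quote are the right ones: for $r\le\nu/2$ the Poisson integral plus the Beta identity gives $|J_\nu(r)|\le (r/2)^\nu/\Gamma(\nu+1)$, and Stirling turns this into $C(e/4)^\nu\le Ce^{-c(\nu+r)}$ precisely because $r\le\nu/2$; for $r\ge2\nu$ the stationary points of $\phi(\theta)=r\sin\theta-\nu\theta$ are uniformly non-degenerate, $|\phi''(\pm\theta_0)|=\sqrt{r^2-\nu^2}\ge\tfrac{\sqrt3}{2}r$, so stationary phase together with the $O(r^{-1})$ endpoint terms and \eqref{est:E} yields \eqref{2.14}; and in the turning-point range the third-order van der Corput bound $r^{-1/3}$ combined with the second-order bound $(\nu|r-\nu|)^{-1/4}=\nu^{-1/4}|r-\nu|^{-1/4}$ (or the non-stationary bound $|r-\nu|^{-1}$ when $r<\nu$) is exactly equivalent to \eqref{2.13}. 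The genuinely delicate point, which you identify honestly, is the uniform splitting when the stationary point nearly merges with the inflection, $|r-\nu|\lesssim\nu^{1/3}$; deferring that bookkeeping to \cite{Watson,SW} puts you on the same footing as the authors, who cite rather than prove. One caveat worth noting: your $a_\pm$, obtained by absorbing the residual phase $e^{\pm i(\phi(\theta_0)\mp r)}$ and the factor $(1-(\nu/r)^2)^{-1/4}$, are merely bounded and carry no derivative (symbol-type) estimates; this matches the lemma as stated and suffices for the paper, which only extracts $|J_\nu(r)|\lesssim r^{-1/3}$ and $\int_R^{2R}|J_\nu(r)|^2\,\mathrm{d}r\le C$ from it, but a stronger use of \eqref{2.14} would require tracking derivatives of $a_\pm$, which your argument as written does not provide.
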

Let $f\in L^2(X),$ we define the Hankel transform of order $\nu$ by
\begin{equation}\label{Hankel}
(\mathcal{H}_\nu f)(\rho,\theta)=\int_0^\infty(r\rho)^{-\frac{n-2}{2}}J_\nu(r\rho)f(r,\theta)r^{n-1}\mathrm{d}r.
\end{equation}
In particular,  if the function $f$ is independent of $\theta$,
then
\begin{equation}\label{Hankel-r}
(\mathcal{H}_{\nu}f)(\rho)=\int_0^\infty(r\rho)^{-\frac{n-2}2}J_{\nu}(r\rho)f(r)r^{n-1}\mathrm{d}r.
\end{equation}
We have the following properties of the Hankel transform. We refer the readers to M.Taylor \cite[Chapter 9]{Taylor}, also see  \cite{BPSS,PSS}.
\begin{lemma}\label{lem:hankel}
Let $\mathcal{H}_\nu$ be the Hankel transform in \eqref{Hankel} and
\begin{equation}\label{LV-nu}
A_\nu:=-\partial^2_r-\frac{n-1}{r}\partial_r+\frac{\nu^2-(\frac{n-2}{2})^2}{r^2}.
\end{equation}
Then
\begin{item}
\item $\mathrm{(1)}$ $\mathcal{H}_\nu=\mathcal{H}^{-1}_\nu,$

\item $\mathrm{(2)}$ $\mathcal{H}_\nu$ is self-adjoint, i.e.$\quad \mathcal{H}_\nu=\mathcal{H}^{*}_\nu$,

\item $\mathrm{(3)}$ $\mathcal{H}_\nu$ is an $L^2$ isometry, i.e. $\|\mathcal{H}_\nu f\|_{L^2(X)}=\|f\|_{L^2(X)},$

\item $\mathrm{(4)}$ $\mathcal{H}_\nu(A_\nu f)(\rho,\theta)=\rho^2(\mathcal{H}_\nu f)(\rho,\theta),$ for $f\in L^2.$

\end{item}

\end{lemma}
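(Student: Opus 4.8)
The plan is to reduce all four assertions to the classical Hankel transform on the half-line, where (1)--(3) are the Hankel inversion and Hankel--Plancherel theorems and (4) is a reformulation of Bessel's differential equation. Since the metric cone $X$ carries the volume density $r^{n-1}\,\mathrm{d}r\,\mathrm{d}\sigma_h$, the multiplication operator
\[
U\colon\ L^2\big((0,\infty),r^{n-1}\mathrm{d}r;L^2(Y)\big)\ \longrightarrow\ L^2\big((0,\infty),\mathrm{d}r;L^2(Y)\big),\qquad (Uf)(r,\theta)=r^{\frac{n-1}{2}}f(r,\theta),
\]
is unitary and acts only in the radial variable. A direct change of the radial weight in \eqref{Hankel} gives
\[
\big(U\mathcal{H}_\nu U^{-1}g\big)(\rho,\theta)=\int_0^\infty \sqrt{r\rho}\,J_\nu(r\rho)\,g(r,\theta)\,\mathrm{d}r=:(\mathbf{H}_\nu g)(\rho,\theta),
\]
which is the classical Hankel transform of order $\nu$ (with $L^2(Y)$ an inert parameter space), while the same conjugation turns the operator $A_\nu$ of \eqref{LV-nu} into the model Bessel operator
\[
U A_\nu U^{-1}=B_\nu:=-\partial_r^2+\frac{\nu^2-\tfrac14}{r^2},
\]
the only nontrivial point being the elementary identity $\tfrac14(n-1)(n-3)-\tfrac14(n-2)^2=-\tfrac14$. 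Thus (1), (2), (3) for $\mathcal{H}_\nu$ are equivalent to the same statements for $\mathbf{H}_\nu$, and (4) is equivalent to $\mathbf{H}_\nu B_\nu=\rho^2\mathbf{H}_\nu$.

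For the model transform $\mathbf{H}_\nu$ with $\nu>0$ I would invoke the standard theory (see Watson \cite{Watson} or Taylor \cite[Ch.~9]{Taylor}): the kernel $\sqrt{r\rho}\,J_\nu(r\rho)$ is real and symmetric in $(r,\rho)$, which gives (2); the Hankel inversion formula gives $\mathbf{H}_\nu^2=\mathrm{Id}$, i.e. (1); and these two facts (equivalently, the Hankel--Plancherel theorem) give the $L^2$-isometry (3). Concretely one proves these first on a dense class such as $C_c^\infty\big((0,\infty)\big)$-valued functions, using the asymptotics of $J_\nu$ from Lemma \ref{lem:bessel} and \eqref{est:r}, and then extends by density; the $L^2(Y)$-valued version follows from the scalar one by Fubini.

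For (4), Bessel's equation $J_\nu''+\tfrac1x J_\nu'+(1-\nu^2 x^{-2})J_\nu=0$ shows that $\phi_\rho(r):=\sqrt{r}\,J_\nu(\rho r)$ solves $B_\nu\phi_\rho=\rho^2\phi_\rho$. Taking $g$ in the dense class above and integrating by parts twice in $\int_0^\infty\sqrt{r\rho}\,J_\nu(r\rho)\,(B_\nu g)(r)\,\mathrm{d}r$ moves $B_\nu$ onto $\phi_\rho$; the boundary term at $\infty$ vanishes by compact support, and the one at $r=0$ vanishes because $J_\nu(x)=O(x^\nu)$ there (by \eqref{est:r}) with $\nu>0$, so that both $\sqrt r\,J_\nu(\rho r)$ and its derivative, multiplied by $g$ or $g'$, tend to $0$. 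This yields $\mathbf{H}_\nu(B_\nu g)=\rho^2\mathbf{H}_\nu g$; conjugating back by $U$ gives $\mathcal{H}_\nu(A_\nu f)=\rho^2\mathcal{H}_\nu f$ first on a dense set and then, using the isometry (3), for all $f$ in the (Friedrichs) operator domain of $A_\nu$.

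The step I expect to be the genuine obstacle is not any single computation but the rigorous identification underlying (1)--(3) when $0<\nu<1$: in that range $B_\nu$ is in the limit-circle case at $r=0$, it is not essentially self-adjoint on $C_c^\infty((0,\infty))$, and one must check that $\mathbf{H}_\nu$ is the spectral transformation of the \emph{Friedrichs} realization of $A_\nu$ — the one singled out by the definition of $\Delta_g$ — rather than of some other self-adjoint extension; equivalently, that $\sqrt r\,J_\nu(\rho r)$, and not $\sqrt r\,J_{-\nu}(\rho r)$, is the correct generalized eigenfunction. This is precisely where the hypothesis $\nu_0>0$ (hence $\nu>0$) is used decisively; once this identification is in place, the inversion, Plancherel, and eigenvalue identities follow as above.
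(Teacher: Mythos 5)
Your proposal is correct: the conjugation by $r^{(n-1)/2}$ reducing $\mathcal{H}_\nu$ to the classical half-line Hankel transform and $A_\nu$ to the Bessel operator $-\partial_r^2+(\nu^2-\tfrac14)r^{-2}$, together with Hankel inversion/Plancherel and the integration-by-parts argument for (4), is exactly the standard argument. The paper does not prove the lemma at all but simply cites Taylor \cite[Ch.~9]{Taylor} and \cite{BPSS,PSS}, where this same reduction (including the identification of the Friedrichs realization for $0<\nu<1$ that you rightly flag) is carried out, so your route coincides with the proof the paper implicitly relies on.
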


\subsection{Distorted plan wave and distorted Fourier transform} In this subsection, we derive the plan wave associated with the operator $\LL_V$.
To this end, we need to find the eigenfunction $\phi(r,\theta;\rho,\omega)$ such that
\begin{equation}\label{equ:eigen}
\LL_V\phi(r,\theta;\rho,\omega)=\rho^2\phi(r,\theta;\rho,\omega).
\end{equation}
We claim that
\begin{equation}\label{plan-w}
\phi(r,\theta;\rho,\omega)=(r\rho)^{-\frac{n-2}2}\sum_{\nu\in\Lambda_\infty} J_{\nu}(r\rho)\sum_{\ell=1}^{d(\nu)}Y_{\nu,\ell}(\theta)\overline{Y_{\nu,\ell}(\omega)}
\end{equation}
where $J_\nu$ is the Bessel
function of order $\nu$ and $Y_{\nu,\ell}$ satisfies \eqref{equ:eig}.
To verify this claim,
we write $\LL_V$ in the coordinates $(r,\theta)$ as
\begin{equation}\label{LV-r}
\LL_V=-\partial^2_r-\frac{n-1}{r}\partial_r+\frac{1}{r^2}(\Delta_h+V_0(\theta)),
\end{equation}
if it acts on the function in each $\mathcal{H}^\nu$, then it equals to $A_\nu$  as in \eqref{LV-nu}
\begin{equation}
A_\nu:=-\partial^2_r-\frac{n-1}{r}\partial_r+\frac{\nu^2-(\frac{n-2}{2})^2}{r^2}.
\end{equation}
Therefore it suffices to verify that: for each $\nu$, let $F(r\rho)=(r\rho)^{-\frac{n-2}2}J_{\nu}(r\rho)$, one has
\begin{equation}\label{bess}
\rho^2 F''(r\rho)+\frac{(n-1)\rho}r F'(r\rho)+\Big[\rho^2-\frac{\nu^2-(n-2)^2/4}{r^2}\Big]F(r\rho)=0.
\end{equation}
Indeed, the Bessel function $J_\nu(\lambda)$ solves
\begin{equation}\label{bessfunc}
G''(\lambda)+\frac{1}\lambda
G'(\lambda)+\Big[1-\frac{\nu^2}{\lambda^2}\Big]G(\lambda)=0,
\end{equation}
let $\lambda=r\rho$,  then $F(\lambda)$ satisfies
\begin{equation*}
F''(\lambda)+\frac{n-1}\lambda F'(\lambda)+\Big[1-\frac{\nu^2-(n-2)^2/4}{\lambda^2}\Big]F(\lambda)=0
\end{equation*}
which implies \eqref{bess}. For $$f(r,\theta)=\sum\limits_{\nu\in\Lambda_\infty}\sum\limits_{\ell=1}^{d(\nu)}a_{\nu,\ell}(r)Y_{\nu,\ell}(\theta)\in L^2(X),$$ we define
the distorted Fourier transform
\begin{equation}\label{Fourier}
\begin{split}
\hat f(\rho,\omega)&=\int_0^\infty\int_Y f(r,\theta)\overline{\phi(r,\theta;\rho,\omega)} \,r^{n-1}\mathrm{d}r \mathrm{d}h\\&=\sum_{\nu\in\Lambda_\infty}\sum_{\ell=1}^{d(\nu)}
Y_{\nu,\ell}(\omega)\big(\mathcal{H}_{\nu}a_{\nu,\ell}\big)(\rho).
\end{split}
\end{equation}

\subsection{The representation of solution} Based on the above Hankel transform, we write out  the explicit expression of solution for wave equation
\eqref{equ:Lw}. Recall \eqref{LV-r} in coordinates $(r,\theta)$, then the solution $u(t,r,\theta)$
 satisfies that
\begin{equation}\label{equ:vt}
\begin{cases}
\pa_{tt}u-\pa_{rr}u-\frac{n-1}{r}\pa_ru+\frac1{r^2}\Delta_hu+\frac{V_0(\theta)}{r^2}u=0,\\
u(0,r,\theta)=0, \quad \partial_tu(0,r,\theta)=f(r,\theta).
\end{cases}
\end{equation}
We write Schwartz function $f(r,\theta)$ as
\begin{equation}\label{in-f}
f(r,\theta)=\sum_{\nu\in\Lambda_\infty}\sum_{\ell=1}^{d(\nu)}a_{\nu,\ell}(r)Y_{\nu,\ell}(\theta).
\end{equation}
With separation of variables, then we can write $u$ as a superposition
\begin{equation}\label{3.3}
u(t,r,\theta)=\sum_{\nu\in\Lambda_\infty}\sum_{\ell=1}^{d(\nu)}u_{\nu,\ell}(t,r)Y_{\nu,\ell}(\theta),
\end{equation}
where $u_{\nu,\ell}$ satisfies
\begin{equation}\label{equ:vkl}
\begin{cases}
\pa_{tt}u_{\nu,\ell}-\pa_{rr}u_{\nu,\ell}-\frac{n-1}r\pa_ru_{\nu,\ell}+\frac{\nu^2-(\frac{n-2}{2})^2}{r^2}u_{\nu,\ell}=0,\\
u_{\nu,\ell}(0,r)=0,\quad \partial_tu_{\nu,\ell}(0,r)=a_{\nu,\ell}(r)
\end{cases}
\end{equation}
for each $\nu\in\Lambda_\infty,\ell\in\mathbb{N}$ and ~$1\leq\ell\leq d(\nu)$. Recall $A_\nu$ in \eqref{LV-nu},
we consider
\begin{equation}\label{equ:vkln}
\begin{cases}
\pa_{tt}u_{\nu,\ell}+A_{\nu}u_{\nu,\ell}=0,\\
u_{\nu,\ell}(0,r)=0,\quad \partial_tu_{\nu,\ell}(0,r)=a_{\nu,\ell}(r).
\end{cases}
\end{equation}
Applying the Hankel transform to ~\eqref{equ:vkln}~, by Lemma \ref{lem:hankel}, we have
\begin{equation}\label{equ:vkln1}
\begin{cases}
\pa_{tt}\tilde{u}_{\nu,\ell}+\rho^2\tilde{u}_{\nu,\ell}=0,\\
\tilde{u}_{\nu,\ell}(0,\rho)=0,\quad \partial_t\tilde{u}_{\nu,\ell}(0,\rho)=b_{\nu,\ell}(\rho),
\end{cases}
\end{equation}
where
\begin{equation}\label{3.8}
\tilde{u}_{\nu,\ell}(t,\rho)=(\mathcal{H}_{\nu}
u_{\nu,\ell})(t,\rho),\quad
b_{\nu,\ell}(\rho)=(\mathcal{H}_{\nu}a_{\nu,\ell})(\rho).
\end{equation}
By solving this ODE and using the Hankel transform, we obtain
\begin{equation*}
u_{\nu,\ell}(t,r)=\mathcal{H}_{\nu}[\rho^{-1}\sin (t\rho) \, b_{\nu,\ell}(\rho)](r)
\end{equation*}
Therefore, by \eqref{3.3} and the definition of Hankel transform, we get
\begin{align}\label{sol}
 u(t,r,\theta)&=\sum_{\nu\in\Lambda_\infty}\sum_{\ell=1}^{d(\nu)}\mathcal{H}_{\nu}\big[\rho^{-1}\sin (t\rho) b_{\nu,\ell}(\rho)\big](r)Y_{\nu,\ell}(\theta)\nonumber\\
&=\sum_{\nu\in\Lambda_\infty}\sum_{\ell=1}^{d(\nu)}Y_{\nu,\ell}(\theta) \int_0^\infty(r\rho)^{-\frac{n-2}2}J_{\nu}(r\rho)\sin (t\rho) b_{\nu,\ell}(\rho)\rho^{n-2}\;d\rho .
\end{align}

We finally record the Van der Corput lemma for convenience.

\begin{lemma}\label{lem:VdC}
Let $\phi$ be a smooth real-valued function defined on an interval $[a,b]$ and let $|\phi^{(k)}(x)|\geq1$ for all $x\in[a,b].$ Then
\begin{equation}\label{VdC}
|\int_a^be^{i\lambda\phi(x)}\mathrm{d}x|\leq c_k\lambda^{-\frac{1}{k}}
\end{equation}
holds when:

$\bullet\quad k\geq2$ or

$\bullet\quad k=1$ and $\phi'(x)$ is monotonic.

The bound $c_k$ is independent of $\phi$ and $\lambda.$

\end{lemma}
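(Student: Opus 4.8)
The plan is to prove Lemma \ref{lem:VdC} by induction on the order $k$ of the derivative bounded below.

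\emph{Base case $k=1$.} Here $|\phi'(x)|\ge 1$ on $[a,b]$ and $\phi'$ is monotonic. I would write $e^{i\lambda\phi}=\frac{1}{i\lambda\phi'}\frac{d}{dx}\bigl(e^{i\lambda\phi}\bigr)$ and integrate by parts:
\begin{equation*}
\int_a^b e^{i\lambda\phi(x)}\,dx=\Bigl[\frac{e^{i\lambda\phi(x)}}{i\lambda\phi'(x)}\Bigr]_a^b-\frac{1}{i\lambda}\int_a^b e^{i\lambda\phi(x)}\,\frac{d}{dx}\Bigl(\frac{1}{\phi'(x)}\Bigr)\,dx.
\end{equation*}
Since $|1/\phi'|\le 1$, the boundary term is $\le 2/\lambda$. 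Since $\phi'$ is monotonic, so is $1/\phi'$, hence $\frac{d}{dx}(1/\phi')$ has a constant sign and $\int_a^b\bigl|\frac{d}{dx}(1/\phi')\bigr|\,dx=\bigl|1/\phi'(b)-1/\phi'(a)\bigr|\le 2$. Therefore $\bigl|\int_a^b e^{i\lambda\phi}\,dx\bigr|\le 4\lambda^{-1}$, so $c_1=4$ works.

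\emph{Inductive step.} Assume \eqref{VdC} holds for some $k-1\ge 1$ with constant $c_{k-1}$, and suppose $|\phi^{(k)}|\ge 1$ on $[a,b]$. By continuity $\phi^{(k)}$ has a constant sign; replacing $\phi$ by $-\phi$ if needed (this conjugates the integral, leaving $|\int_a^b e^{i\lambda\phi}|$ and all hypotheses unchanged) we may assume $\phi^{(k)}\ge 1$, so $\phi^{(k-1)}$ is strictly increasing. For $\delta>0$ set $I_\delta=\{x\in[a,b]:|\phi^{(k-1)}(x)|<\delta\}$; since $\phi^{(k-1)}$ is monotone with $|(\phi^{(k-1)})'|\ge 1$, the set $I_\delta$ is a subinterval of $[a,b]$ of length at most $2\delta$. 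On $[a,b]\setminus I_\delta$, a union of at most two intervals, one has $|\phi^{(k-1)}|\ge\delta$; applying the inductive hypothesis to the rescaled phase $\phi/\delta$ on each of these intervals (when $k-1=1$ one also uses that $\phi'$ is monotonic, which holds since $\phi''=\phi^{(k)}\ge 1>0$) bounds the integral there by $c_{k-1}(\lambda\delta)^{-1/(k-1)}$. Estimating $\bigl|\int_{I_\delta}e^{i\lambda\phi}\bigr|\le|I_\delta|\le 2\delta$ trivially, we get
\begin{equation*}
\Bigl|\int_a^b e^{i\lambda\phi(x)}\,dx\Bigr|\le 2\delta+2c_{k-1}(\lambda\delta)^{-1/(k-1)}.
\end{equation*}
Choosing $\delta=\lambda^{-1/k}$ yields $(\lambda\delta)^{-1/(k-1)}=(\lambda^{(k-1)/k})^{-1/(k-1)}=\lambda^{-1/k}$, so the right-hand side is $(2+2c_{k-1})\lambda^{-1/k}$, proving \eqref{VdC} for $k$ with $c_k=2+2c_{k-1}$, a constant depending only on $k$.

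\emph{On the main difficulty.} There is no deep obstacle here — this is the classical van der Corput argument — so the proposal is really to get the bookkeeping right: (i) the normalization $\phi^{(k)}\ge 1$ is legitimate precisely because $|\phi^{(k)}|\ge 1$ forces a constant sign; (ii) the claim that $I_\delta$ is an interval of length $\le 2\delta$ relies on strict monotonicity of $\phi^{(k-1)}$, which in turn comes from $\phi^{(k)}\ge 1$; (iii) when the induction descends to order $1$, the "$\phi'$ monotonic" hypothesis in the statement must be re-verified, and this is exactly where $\phi''\ge 1$ enters, so that clause in Lemma \ref{lem:VdC} is forced by the structure of the induction rather than incidental; (iv) the degenerate cases ($I_\delta$ empty or all of $[a,b]$, or $\lambda^{-1/k}\ge b-a$) all reduce to the trivial estimate $|\int_a^b e^{i\lambda\phi}|\le b-a$ and cause no trouble.
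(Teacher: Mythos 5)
Your proof is correct: the base case $k=1$ via integration by parts (using monotonicity of $1/\phi'$ to bound the total variation) and the induction on $k$ with the split near the near-vanishing set of $\phi^{(k-1)}$, rescaling by $\delta$ and optimizing $\delta=\lambda^{-1/k}$, is exactly the classical van der Corput argument, and your bookkeeping (constant sign of $\phi^{(k)}$, $|I_\delta|\le 2\delta$, re-verifying monotonicity of $\phi'$ when the induction reaches order one) is sound. The paper itself offers no proof — Lemma \ref{lem:VdC} is merely recorded as a standard fact — so there is nothing to compare against beyond noting that your argument is the textbook one (e.g.\ as in Stein's treatment), with only cosmetically larger constants such as $c_1=4$ in place of the usual $3$.
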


\subsection{Notation}
We use $A\lesssim B$ to denote the statement that $A\leq CB$ for some large constant $C$ which may vary from line to line and depend on various parameters, and similarly, we employ $A\backsim B$ to state that $A\lesssim B\lesssim A.$ We also use $A\ll B$ to denote the statement $A\leq C^{-1}B.$ If a constant $C$ depends on a special parameter other than the above, we shall denote it explicitly by subscripts. For instance, $C_\epsilon$ should be understood as a positive constant not only depending on $p,q,n$ and $S,$ but also on $\epsilon.$ Throughout this paper, pairs of conjugate indices are written as $p,p',$ where $\frac{1}{p}+\frac{1}{p'}=1$ with $1\leq p\leq\infty.$ Let $R>0$ be two dyadic numbers, we define $S_R=[R/2,R]$.




\section{Localized estimates of Hankel transform}

In this section, we utilize the stationary-phase argument to prove the estimates for Hankel transform localized both in frequency and physical spaces.
These inequalities are key to prove main theorem in next section.

\begin{proposition}\label{LRE}

Let $\varphi\in\mathcal{C}_c^\infty(\R)$ be supported in $I:=[1,2]$ and let $R>0$ be a dyadic number and $S_R=[R/2,R]$.
Then
\begin{align}\label{L-2}
&\Big\|\Big(\sum_{\nu\in\Lambda_\infty}\sum_{\ell=1}^{d(\nu)}\Big|\mathcal{H}_{\nu}\big[\rho^{-1}e^{\pm it\rho}\varphi(\rho) b_{\nu,\ell}(\rho)\big](r)\Big|^2\Big)^{1/2}\Big\|_{L^2_tL^2_{r^{n-1}\mathrm{d}r}(\R\times S_R)}\nonumber\\
\lesssim& \min\{R^{\nu_0+1},R^{\frac{1}{2}}\}
\Big\|\Big(\sum_{\nu\in\Lambda_\infty}\sum_{\ell=1}^{d(\nu)}|b_{\nu,\ell}(\rho) \varphi(\rho)|^2\Big)^{1/2}\Big\|_{L^2_\rho(I)},
\end{align}
and
\begin{align}\label{L-infty1}
&\Big\|\Big(\sum_{\nu\in\Lambda_\infty}\sum_{\ell=1}^{d(\nu)}\Big|\mathcal{H}_{\nu}\big[\rho^{-1}e^{\pm it\rho}\varphi(\rho) b_{\nu,\ell}(\rho)\big](r)\Big|^2\Big)^{1/2}\Big\|_{L^\infty_tL^\infty_{r^{n-1}\mathrm{d}r}(\R\times S_R)}  \nonumber\\
\lesssim& \min\{R^{\nu_0-\frac{n-2}2},R^{-\frac{n-2}{2}-\frac13}\}
\Big\|\Big(\sum_{\nu\in\Lambda_\infty}\sum_{\ell=1}^{d(\nu)}|b_{\nu,\ell}(\rho)|^2\Big)^{1/2}\varphi(\rho)\Big\|_{L^1_\rho(I)},
\end{align}
and
\begin{align}\label{L-infty}
&\Big\|\Big(\sum_{\nu\in\Lambda_\infty}\sum_{\ell=1}^{d(\nu)}\Big|\mathcal{H}_{\nu}\big[\rho^{-1}e^{\pm it\rho}\varphi(\rho) b_{\nu,\ell}(\rho)\big](r)\Big|^2\Big)^{1/2}\Big\|_{L^\infty_tL^\infty_{r^{n-1}\mathrm{d}r}(\R\times S_R)}  \nonumber\\
\lesssim& \min\{R^{\nu_0-\frac{n-2}2},R^{-\frac{n-1}{2}}\}
\Big\|\Big(\sum_{\nu\in\Lambda_\infty}\sum_{\ell=1}^{d(\nu)}|b_{\nu,\ell}(\rho)|^2\Big)^{1/2}\varphi(\rho)\Big\|_{L^2_\rho(I)}.
\end{align}
\end{proposition}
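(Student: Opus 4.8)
The plan is to strip off the $\ell^2$-sum over $(\nu,\ell)$ and reduce all three inequalities to bounds, uniform in $\nu\in\Lambda_\infty$ and in the dyadic number $R>0$, for the scalar operator
\[
T_\nu g(t,r):=\mathcal{H}_\nu\big[\rho^{-1}e^{\pm it\rho}\varphi(\rho)g(\rho)\big](r)=r^{-\frac{n-2}2}\int_1^2\rho^{\frac{n-2}2}J_\nu(r\rho)\,e^{\pm it\rho}\varphi(\rho)g(\rho)\,\mathrm{d}\rho .
\]
For \eqref{L-2} I would square and integrate term by term (Tonelli), so that it suffices to prove $\|T_\nu g\|_{L^2_tL^2_{r^{n-1}\mathrm{d}r}(\R\times S_R)}\lesssim\min\{R^{\nu_0+1},R^{1/2}\}\,\|\varphi g\|_{L^2(I)}$. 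For \eqref{L-infty1} and \eqref{L-infty} I would instead establish pointwise-in-$(t,r)$ kernel bounds, $|T_\nu g(t,r)|\lesssim\min\{R^{\nu_0-\frac{n-2}2},R^{-\frac{n-2}2-\frac13}\}\,\|\varphi g\|_{L^1(I)}$ and $|T_\nu g(t,r)|\lesssim\min\{R^{\nu_0-\frac{n-2}2},R^{-\frac{n-1}2}\}\,\|\varphi g\|_{L^2(I)}$ for all $(t,r)\in\R\times S_R$; then $\big(\sum_{\nu,\ell}|T_\nu b_{\nu,\ell}(t,r)|^2\big)^{1/2}$ is at most the kernel constant times $\big(\sum_{\nu,\ell}\|\varphi b_{\nu,\ell}\|_{L^p(I)}^2\big)^{1/2}$, and Minkowski's integral inequality (an equality when $p=2$) converts this last factor into $\big\|\big(\sum_{\nu,\ell}|b_{\nu,\ell}|^2\big)^{1/2}\varphi\big\|_{L^p(I)}$, which is precisely the right-hand side of \eqref{L-infty1}, \eqref{L-infty}.

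For \eqref{L-2} I would freeze $r$ and regard $t\mapsto T_\nu g(t,r)$ as the Fourier transform in $t$ of the $[1,2]$-supported function $\rho\mapsto r^{-\frac{n-2}2}\rho^{\frac{n-2}2}J_\nu(r\rho)\varphi(\rho)g(\rho)$; Plancherel in $t$, multiplication by $r^{n-1}$, integration over $r\in S_R$, and the substitution $s=r\rho$ reduce everything to the uniform Bessel estimate
\[
\int_{R/2}^{2R}s\,|J_\nu(s)|^2\,\mathrm{d}s\lesssim\min\{R^{2\nu_0+2},\,R\},\qquad \nu\in\Lambda_\infty .
\]
For $R\lesssim1$ this follows from \eqref{est:r}, which gives $|J_\nu(s)|\lesssim s^{\nu}$ uniformly in $\nu\ge\nu_0$ (since $\big(2^\nu\Gamma(\nu+\tfrac12)\big)^{-1}$ and $1+(\nu+\tfrac12)^{-1}$ stay bounded there), whence the integral is $\lesssim R^{2\nu+2}\le R^{2\nu_0+2}$. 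For $R\gtrsim1$ I would split according to Lemma \ref{lem:bessel}: the part over $\{s\le\nu/2\}$ is $O(e^{-c\nu})$ by \eqref{2.12}; on the Airy transition $\{\nu/2\le s\le2\nu\}$, the estimate \eqref{2.13} with the change of variables $u=\nu^{-1/3}(s-\nu)$ gives a contribution $\lesssim\nu\cdot\nu^{-2/3}\cdot\nu^{2/3}\simeq\nu\simeq R$; and on $\{s\ge2\nu\}$ the bound $|J_\nu(s)|\lesssim s^{-1/2}$ from \eqref{2.14} gives $\int s\cdot s^{-1}\,\mathrm{d}s\lesssim R$. The finitely many $\nu$ outside the scope of Lemma \ref{lem:bessel} are covered by the classical fixed-order asymptotic $|J_\nu(s)|\lesssim_\nu s^{-1/2}$ for $s\ge1$.

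For \eqref{L-infty1} and \eqref{L-infty}, when $R\lesssim1$ the uniform bound $|J_\nu(r\rho)|\lesssim(r\rho)^\nu\lesssim R^{\nu_0}$ gives at once $|T_\nu g(t,r)|\lesssim R^{\nu_0-\frac{n-2}2}\|\varphi g\|_{L^1(I)}$, which settles both inequalities there. When $R\gtrsim1$, Lemma \ref{lem:bessel} yields $\sup_{s\simeq R}|J_\nu(s)|\lesssim R^{-1/3}$ uniformly in $\nu$ (from $s^{-1/2}\le R^{-1/3}$ when $s\gtrsim\nu$, from $\nu^{-1/3}\simeq R^{-1/3}$ on the transition $s\sim\nu$, and from exponential smallness when $s\le\nu/2$), which gives \eqref{L-infty1}. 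For \eqref{L-infty} with $R\gtrsim1$ this pointwise bound only produces $R^{-\frac{n-2}2-\frac13}$, so I would recover the missing factor $R^{-1/6}$ from oscillation by splitting on the size of $r\rho$ relative to $\nu$: on the transition range $r\rho\sim\nu$, Cauchy--Schwarz in $\rho$ together with the averaged bound $\int_r^{2r}|J_\nu(s)|^2\,\mathrm{d}s\lesssim1$ coming from \eqref{2.13} gives $|T_\nu g(t,r)|\lesssim R^{-\frac{n-1}2}\|\varphi g\|_{L^2(I)}$; on $r\rho\le\nu/2$ the exponential bound \eqref{2.12} is far more than enough; on $r\rho\ge2\nu$ I would insert the oscillatory asymptotic \eqref{2.14}, dispatch the $O\big((r\rho)^{-1}\big)$ error term, and apply Cauchy--Schwarz in $\rho$ to the main terms, whose amplitude has $L^2_\rho(I)$-norm of size $R^{-1/2}$, getting $R^{-\frac{n-1}2}$ again. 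Note that no genuine stationary phase is needed for these localized estimates, since the supremum over $t$ freezes the phase; the oscillation is used only through the structure of the expansion \eqref{2.14}.

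The main obstacle is this last case analysis for \eqref{L-infty}: extracting the oscillatory gain $R^{-1/2}$ rather than the trivial $R^{-1/3}$ forces one to use the precise expansion \eqref{2.14} in the far zone $r\rho\gtrsim\nu$ and the sharp $L^2$-averaged Bessel bound $\int_r^{2r}|J_\nu(s)|^2\,\mathrm{d}s\lesssim1$ in the Airy transition zone $r\rho\sim\nu$, and one must verify that \eqref{2.13} integrates to exactly that power. A secondary difficulty is the small-$R$ versus large-$R$ bookkeeping needed to make the stated minima appear with constants uniform in $\nu\in\Lambda_\infty$, in particular absorbing the finitely many small $\nu$, for which the hypothesis $\nu\gg1$ of Lemma \ref{lem:bessel} fails, into universal constants by means of \eqref{est:r} and the classical Bessel asymptotics.
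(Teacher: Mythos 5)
Your proposal is correct, and for \eqref{L-2}, \eqref{L-infty1} and the whole small-$R$ regime it is essentially the paper's own argument: Plancherel/Hausdorff--Young in $t$, the uniform bounds $|J_\nu(s)|\lesssim s^{\nu}$ for $s\lesssim 1$ and $|J_\nu(s)|\lesssim s^{-1/3}$ for $s\gg 1$, and the averaged estimate $\int_R^{2R}|J_\nu(s)|^2\,\mathrm{d}s\lesssim 1$ uniformly in $\nu$, which the paper also invokes (quoting \cite{Zhang}) and which you reprove from Lemma \ref{lem:bessel}; your $\ell^2$ bookkeeping (Tonelli for \eqref{L-2}, Minkowski to pass from $\ell^2(L^1_\rho)$ to $L^1_\rho(\ell^2)$ for \eqref{L-infty1}) and your handling of the finitely many small $\nu\in\Lambda_\infty$ are fine. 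Where you genuinely diverge is \eqref{L-infty} for $R\gg1$: the paper goes through Schl\"afli's representation \eqref{SIR}, splits the angular integral, applies Van der Corput (Lemma \ref{lem:VdC}) to two pieces, and for the main piece expands $b_{\nu,\ell}$ in a Fourier series in $\rho$, estimates the kernels $\psi^\nu_m$, counts the $O(R)$ relevant translates and uses Parseval; you instead get the pointwise-in-$(t,r)$ bound $|T_\nu g(t,r)|\lesssim R^{-\frac{n-1}{2}}\|\varphi g\|_{L^2(I)}$ directly by Cauchy--Schwarz in $\rho$, since $\int_1^2|J_\nu(r\rho)|^2\rho^{n-2}\,\mathrm{d}\rho\lesssim r^{-1}\int_r^{2r}|J_\nu(s)|^2\,\mathrm{d}s\lesssim R^{-1}$ uniformly in $\nu$ --- the same averaged Bessel bound already used for \eqref{L-2}; your zone decomposition (exponential zone, Airy zone via \eqref{2.13}, far zone via \eqref{2.14}) is just the verification of that average, and, as you correctly note, no stationary phase or $t$-oscillation is needed because the supremum in $t$ freezes the phase. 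This simpler route fully suffices for the statement as written: the gain $R^{-1/2}$ over the sup bound $R^{-1/3}$ comes from $\rho$-averaging, which the $L^2_\rho(I)$ norm on the right-hand side permits; the paper's heavier machinery would only become necessary if one wanted such a gain against an $L^1_\rho$ datum (impossible pointwise, since the Airy-region supremum is genuinely of size $R^{-1/3}$) or finer time-integrated information, so your argument is a legitimate and shorter proof of Proposition \ref{LRE}.
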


\begin{proof}
 To prove this proposition, we divide into two cases $R\lesssim 1$ and $R\gg1$.
For $R\lesssim 1$, it suffices to prove, for $q\geq2$
\begin{align*}
&\Big\|\Big(\sum_{\nu\in\Lambda_\infty}\sum_{\ell=1}^{d(\nu)}\Big|\int_0^\infty r^{-\frac{n-2}{2}}e^{\pm it\rho}
J_{\nu}(r\rho)b_{\nu,\ell}(\rho)\varphi(\rho)\rho^{\frac{n-2}{2}}\mathrm{d}\rho\Big|^2\Big)^{1/2}\Big\|
_{L^q_tL^q_{r^{n-1}\mathrm{d}r}(\R\times S_R)} \nonumber \\
\lesssim& R^{\frac{n}{q}+\nu_0-\frac{n-2}2}
\Big\|\Big(\sum_{\nu\in\Lambda_\infty}\sum_{\ell=1}^{d(\nu)}|b_{\nu,\ell}(\rho) \varphi(\rho)|^2\Big)^{1/2}\Big\|_{L^2_\rho(I)}.
\end{align*}
To this end, since $q\geq2,$ we use the Minkowski inequality and the Hausdorff-Young inequality in $t$ variable to obtain
\begin{align*}
&\Big\|\Big(\sum_{\nu\in\Lambda_\infty}\sum_{\ell=1}^{d(\nu)}\Big|\int_0^\infty r^{-\frac{n-2}{2}}e^{\pm it\rho}
J_{\nu}(r\rho)b_{\nu,\ell}(\rho)\varphi(\rho)\rho^{\frac{n-2}{2}}\mathrm{d}\rho\Big|^2\Big)^{1/2}\Big\|
_{L^q_tL^q_{r^{n-1}\mathrm{d}r}(\R\times S_R)} \nonumber \\
\lesssim&
\Big\|\Big(\sum_{\nu\in\Lambda_\infty}\sum_{\ell=1}^{d(\nu)}\Big\|r^{-\frac{n-2}{2}}J_{\nu}(r\rho)
b_{\nu,\ell}(\rho)\varphi(\rho)\rho^{\frac{n-2}{2}}\Big\|^2_{L^{q'}_\rho(I)}\Big)^{1/2}\Big\|_{L^q_{r^{n-1}dr}(S_R)}.
\end{align*}
Recall \eqref{est:r} the rough estimate for Bessel function
\begin{equation*}
|J_\nu(r)|\leq\frac{Cr^\nu}{2^\nu\Gamma(\nu+\frac{1}{2})\Gamma(\frac12)}\Big(1+\frac{1}{\nu+\frac12}\Big),
\end{equation*}
then, by using Stirling's formula $\Gamma(\nu+1)\sim\sqrt{\nu}(\nu/e)^\nu$, we obtain
\begin{align*}
&\Big\|\Big(\sum_{\nu\in\Lambda_\infty}\sum_{\ell=1}^{d(\nu)}\Big\|r^{-\frac{n-2}{2}}J_{\nu}(r\rho)
b_{\nu,\ell}(\rho)\varphi(\rho)\rho^{\frac{n-2}{2}}\Big\|^2_{L^{q'}_\rho(I)}\Big)^{1/2}\Big\|_{L^q_{r^{n-1}dr}(S_R)}  \nonumber \\
\lesssim&
R^{\frac{n}{q}+\nu_0-\frac{n-2}2}\Big\|\Big(\sum_{\nu\in\Lambda_\infty}\sum_{\ell=1}^{d(\nu)}|b_{\nu,\ell}(\rho)|^2\Big)^{1/2}
\varphi(\rho)\Big\|_{L^{q'}_\rho(I)},
\end{align*}
where we have used Minkowski's inequality again and $\rho\in I=[1,2]$.
Therefore, by choosing $q=2$ and $q=\infty$ respectively, we have proved \eqref{L-2}, \eqref{L-infty1} and \eqref{L-infty} when $R\lesssim 1$.\vspace{0.2cm}

Next we consider the case $R\gg1$. We first prove \eqref{L-2}. By using the same argument as above (the Minkowski inequality and the Hausdorff-Young inequality in $t$),
we have
\begin{align*}
&\Big\|\Big(\sum_{\nu\in\Lambda_\infty}\sum_{\ell=1}^{d(\nu)}\Big|\int_0^\infty r^{-\frac{n-2}{2}}e^{\pm it\rho}
J_{\nu}(r\rho)b_{\nu,\ell}(\rho)\varphi(\rho)\rho^{\frac{n-2}{2}}\mathrm{d}\rho\Big|^2\Big)^{1/2}\Big\|
_{L^2_tL^2_{r^{n-1}\mathrm{d}r}(\R\times S_R)} \nonumber \\
\lesssim&
R^{1/2}\Big\|\Big(\sum_{\nu\in\Lambda_\infty}\sum_{\ell=1}^{d(\nu)}\Big\|J_{\nu}(r\rho)
b_{\nu,\ell}(\rho)\varphi(\rho)\rho^{\frac{n-2}{2}}\Big\|^2_{L^{2}_\rho(I)}\Big)^{1/2}\Big\|_{L^2_{dr}(S_R)}.
\end{align*}
By using Lemma \ref{lem:bessel}, we can prove
\begin{equation}
\int_R^{2R}|J_\nu(r)|^2 \mathrm{d}r\leq C, \quad R\gg1,
\end{equation}
where the constant $C$ is independent of $R$ and $\nu$. We refer to \cite[(3.21)]{Zhang} for details.
Thus we prove \eqref{L-2} for $R\gg1$.

Next we prove \eqref{L-infty1} which is a consequence of
\begin{align*}
&\Big\|\Big(\sum_{\nu\in\Lambda_\infty}\sum_{\ell=1}^{d(\nu)}\Big|\int_0^\infty J_{\nu}(r\rho)e^{-it\rho}b_{\nu,\ell}(\rho)\varphi(\rho)\rho^{\frac{n-2}{2}}\mathrm{d}\rho
\Big|^2\Big)^{1/2}\Big\|_{L^\infty_tL^\infty_r(\R\times S_R)} \nonumber \\
\lesssim&
R^{-1/3}\Big\|\Big(\sum_{\nu\in\Lambda_\infty}\sum_{\ell=1}^{d(\nu)}|b_{\nu,\ell}(\rho)|^2\Big)^{1/2}\varphi(\rho)\Big\|_{L^1_\rho(I)}.
\end{align*}
This is easily proved by using the Minkowski inequality and the Hausdorff-Young inequality in $t$ variable as before due to the uniform estimate
\begin{equation}
|J_{\nu}(r)|\leq C r^{-1/3}\qquad r\gg 1,
\end{equation}
which is implied by Lemma \ref{lem:bessel}.

Now we prove \eqref{L-infty} which will be implied by
\begin{align*}
&\Big\|\Big(\sum_{\nu\in\Lambda_\infty}\sum_{\ell=1}^{d(\nu)}\Big|\int_0^\infty J_{\nu}(r\rho)e^{-it\rho}b_{\nu,\ell}(\rho)\varphi(\rho)\rho^{\frac{n-2}{2}}\mathrm{d}\rho
\Big|^2\Big)^{1/2}\Big\|_{L^\infty_tL^\infty_r(\R\times S_R)} \nonumber \\
\lesssim&
R^{-1/2}\Big\|\Big(\sum_{\nu\in\Lambda_\infty}\sum_{\ell=1}^{d(\nu)}|b_{\nu,\ell}(\rho)|^2\Big)^{1/2}\varphi(\rho)\Big\|_{L^2_\rho(I)}.
\end{align*}

Recall Schl\"afli's integral representation \eqref{SIR}, we write $J_{\nu}(r\rho)$ as
\begin{align}\label{SIR'}
J_{\nu}(r\rho)&=\frac{1}{2\pi}\int_{-\pi}^\pi e^{ir\rho\sin\theta-i\nu\theta}
\mathrm{d}\theta-\frac{\sin(\nu\pi)}{\pi}\int_0^\infty e^{-(r\sinh s+\nu s)}
\mathrm{d}s \nonumber\\
&=\widetilde{J}_{\nu}(r\rho)-E_{\nu}(r\rho),
\end{align}
where
\begin{equation}\label{est:E'}
 |E_{\nu}(r\rho)|\leq C(r\rho)^{-1}
\end{equation}
with $C$ being independent of $k$ and $R.$
There, by the same argument as before and ~\eqref{est:E'}~, we easily get
\begin{align}\label{est:E-term}
&\Big\|\Big(\sum_{\nu\in\Lambda_\infty}\sum_{\ell=1}^{d(\nu)}\Big|\int_0^\infty e^{-it\rho}E_{\nu}(r\rho)b_{\nu,\ell}(\rho)
\varphi(\rho)\rho^{\frac{n-2}2}\mathrm{d}\rho\Big|^2\Big)^{1/2}\Big\|_{L^\infty_t(\R;L^\infty_r(R/2,R))}\nonumber\\
\lesssim& R^{-1}\Big\|\Big(\sum_{\nu\in\Lambda_\infty}\sum_{\ell=1}^{d(\nu)}|b_{\nu,\ell}(\rho)\varphi(\rho)|^2\Big)^{1/2}\Big\|_{L^1_\rho(I)}.
\end{align}

It thus only remains to prove
\begin{align}\label{re:J}
&\Big\|\Big(\sum_{\nu\in\Lambda_\infty}\sum_{\ell=1}^{d(\nu)}\Big|\int_0^\infty e^{-it\rho}\widetilde{J}_{\nu}(r\rho)
b_{\nu,\ell}(\rho)\varphi(\rho)\rho^{\frac{n-2}2}\mathrm{d}\rho\Big|^2\Big)^{1/2}
\Big\|_{L^\infty_t(\R;L^\infty_r([R/2,R]))}\nonumber\\
\lesssim& R^{-\frac{1}{2}}\Big\|\Big(\sum_{\nu\in\Lambda_\infty}\sum_{\ell=1}^{d(\nu)}|b_{\nu,\ell}(\rho)\varphi(\rho)|^2\Big)^{1/2}\Big\|_{L^2_\rho(I)}.
\end{align}
To use the stationary phase argument, we decompose $[-\pi,\pi]$ into three intervals
\begin{align}\label{decom}
[-\pi,\pi]=I_1\cup I_3\cup I_2\nonumber
\end{align}
where
\begin{equation}\label{decom'}
\begin{split}
I_1&=[-\delta,\delta],\\ I_2&=[-\pi, -\frac{\pi}{2}-\delta]
\cup[\frac{\pi}{2}+\delta,\pi], \\ I_3&=[-\frac{\pi}{2}-\delta,-\delta]
\cup[\delta,\frac{\pi}{2}+\delta]
\end{split}
\end{equation}
with $0<\delta\ll1$ being fixed later.
Let
\begin{equation*}
\Phi_{r,\nu}(\theta)=\sin\theta-\frac{\nu}{r}\theta
\end{equation*}
and a simple computation yields
\begin{equation*}
\Phi'_{r,\nu}(\theta)=\cos\theta-\frac{\nu}{r}, \quad \Phi''_{r,\nu}(\theta)=-\sin\theta.
\end{equation*}
Construct a smooth function $\Lambda_\delta(\theta)$ which is defined by
\begin{align}\label{4.12}
\Lambda_\delta(\theta)=
\begin{cases}
&1, \quad \theta\in I_1,\\
&0, \quad \theta\notin2I_1.
\end{cases}
\end{align}
Therefore,  based on ~\eqref{decom'}, we write $\widetilde{J}_\nu(r)$ as
\begin{align*}
\widetilde{J}_\nu(r)&=\widetilde{J}_\nu^1(r)+\widetilde{J}_\nu^2(r)+\widetilde{J}_\nu^3(r).
\end{align*}
where
\begin{equation*}
\begin{split}
\widetilde{J}_\nu^1(r)
&=\frac{1}{2\pi}\int_{-\pi}^\pi e^{ir\Phi_{r,\nu}(\theta)}\Lambda_\delta(\theta) \mathrm{d}\theta,\\
\widetilde{J}_\nu^2(r)&=\frac{1}{2\pi}\int_{I_2} e^{ir\Phi_{r,\nu}(\theta)}
\mathrm{d}\theta,\\
\widetilde{J}_\nu^3(r)&=
\frac{1}{2\pi}\int_{I_3}e^{ir\Phi_{r,\nu}(\theta)}(1-\Lambda_\delta(\theta))\mathrm{d}\theta.
\end{split}
\end{equation*}
For $\theta\in I_2,$ thus $|\Phi'_{r,\nu}(\theta)|=|\cos\theta-\frac{\nu}{r}|\geq\sin\delta$; When $\theta\in I_3$,  one has $|\Phi''_{r,\nu}(\theta)|\geq\sin\delta$.
By using Van der Corput lemma \ref{lem:VdC},  we have
\begin{equation}\label{4.14}
|\widetilde{J}^2_\nu(r)|
\leq C_\delta r^{-1},\quad |\widetilde{J}^3_\nu(r)|
\leq C_\delta r^{-1/2}.
\end{equation}
Hence, applying the Hausdorff-Young inequality and Minkowski's inequality, we get
\begin{align}\label{4.15}
&\Big\|\Big(\sum_{\nu\in\Lambda_\infty}\sum_{\ell=1}^{d(\nu)}\Big|\int_0^\infty e^{-it\rho}\big(\widetilde{J}_{\nu}^2(r\rho)+|\widetilde{J}^3_\nu(r\rho)\big)b_{\nu,\ell}(\rho)
\varphi(\rho)\rho^{\frac{n-2}2}\mathrm{d}\rho\Big|^2\Big)^{1/2}\Big\|_{L^\infty_t(\R;L^\infty_r(R/2,R))}\nonumber\\
\lesssim& R^{-1/2}\Big\|\Big(\sum_{\nu\in\Lambda_\infty}\sum_{\ell=1}^{d(\nu)}|b_{\nu,\ell}(\rho)|^2\Big)^{1/2}\varphi(\rho)\Big\|_{L^1_\rho(I)}.
\end{align}
So proving \eqref{re:J} is reduced to prove
\begin{align}\label{4.18}
&\Big\|\Big(\sum_{\nu\in\Lambda_\infty}\sum_{\ell=1}^{d(\nu)}\Big|\int_0^\infty e^{-it\rho}\widetilde{J}_{\nu}^1(r\rho)b_{\nu,\ell}(\rho)
\varphi(\rho)\rho^{\frac{n-2}2}\mathrm{d}\rho\Big|^2\Big)^{1/2}\Big\|_{L^\infty_t(\R;L^\infty_r(R/2,R))}\nonumber\\
\lesssim& R^{-1/2}\Big\|\Big(\sum_{\nu\in\Lambda_\infty}\sum_{\ell=1}^{d(\nu)}|b_{\nu,\ell}(\rho)|^2\Big)^{1/2}\varphi(\rho)\Big\|_{L^2_\rho(I)}.
\end{align}
For our purpose, we write the Fourier series of $b_{\nu,\ell}(\rho)$ as
\begin{equation}\label{4.19}
b_{\nu,\ell}(\rho)=\sum_jb_{\nu,l}^je^{i\frac{\pi}{2}\rho j}, \quad
b_{\nu,\ell}^j=\frac{1}{4}\int_0^4b_{\nu,l}(\rho)e^{-i\frac{\pi}{2}\rho j}  \mathrm{d}\rho.
\end{equation}

Therefore we have
\begin{equation}\label{4.24}
\|b_{\nu,\ell}(\rho)\|^2_{L^2_\rho(I)}=\sum_j|b_{\nu,\ell}^j|^2
\end{equation}
and
\begin{align}\label{4.25}
&\int_0^\infty e^{-it\rho}\widetilde{J}_{\nu}^1(r\rho)
b_{\nu,\ell}(\rho)\varphi(\rho)\rho^{\frac{n-2}2}\mathrm{d}\rho\nonumber\\
=&\frac{1}{2\pi}\int_0^\infty e^{-it\rho}\int_{-\pi}^{\pi}e^{ir\rho\sin\theta-i\nu\theta}
\Lambda_\delta(\theta)\sum_jb_{\nu,\ell}^je^{i\frac{\pi}{2}\rho j}\varphi(\rho)\rho^{\frac{n-2}2}\mathrm{d}\rho\mathrm{d}\theta\nonumber\\
\lesssim&\sum_jb_{\nu,\ell}^j\int_{\R^2}e^{2\pi i\rho(r\sin\theta-(t-\frac{j}{4}))}\varphi(\rho)\rho^{\frac{n-2}2}\mathrm{d}\rho e^{-i\nu\theta}\Lambda_\delta(\theta)\mathrm{d}\theta.
\end{align}
Then we estimate the term in ~\eqref{4.25}~.
Let $m=t-\frac{j}{4}$, we write
\begin{align}\label{4.26}
\psi_m^\nu(r)
=&\int_{\R^2}e^{2\pi i\rho(r\sin\theta-m)}\varphi(\rho)\rho^{\frac{n-2}2}\mathrm{d}\rho e^{-i\nu\theta}\Lambda_\delta(\theta)\mathrm{d}\theta\nonumber\\
=&\int_{\R}\check{\varphi}(r\sin\theta-m) e^{-i\nu\theta}\Lambda_\delta(\theta)\mathrm{d}\theta.
\end{align}
It is apparent $\check{\varphi}$ is a Schwartz function, so we have for any $N>0$
\begin{equation}\label{4.27}
|\check{\varphi}(r\sin\theta-m)|\leq C_N(1+|r\sin\theta-m|)^{-N}.
\end{equation}
We consider two cases to study the property of function $\psi_m^\nu(r)$.

{\bf Case 1: $|m|\geq4R.$} Since $r\leq2R\leq|m|$ and $|\theta|\leq2\delta,$
we have
\begin{equation}\label{4.28}
|r\sin\theta-m|\geq|m|-r|\sin\theta|\geq\frac{1}{100}|m|
\end{equation}
and thus
\begin{equation}\label{4.29}
|\psi_m^\nu(r)|\leq C_{\delta,N}(1+|m|)^{-N}.
\end{equation}
Using this inequality, we control \eqref{4.25} by
\begin{equation}\label{4.30}
C_{\delta,N}R^{-N}\Big\|\Big(\sum_{\nu\in\Lambda_\infty}\sum_{\ell=1}^{d(\nu)}\Big|\sum_{j:4R\leq|t-\frac{j}{4}|}
b_{\nu,\ell}^j\Big(1+\Big|t-\frac{j}{4}\Big|\Big)^{-N}
\Big|^2\Big)^{1/2}\Big\|_{L^\infty_t(\R;L^\infty_r(R/2,R))}.
\end{equation}
Applying Cauchy-Schwartz's inequality to the above inequality and then choosing $N$ large enough, we bound ~\eqref{4.30}~ by
\begin{equation}\label{4.31}
C_{\delta,N}R^{-N}\Big(\sum_{\nu\in\Lambda_\infty}\sum_{\ell=1}^{d(\nu)}\sum_j|b_{\nu,\ell}^j|^2\Big)^{1/2}
\lesssim R^{-N}\Big\|\Big(\sum_{\nu\in\Lambda_\infty}\sum_{\ell=1}^{d(\nu)}\Big|b_{\nu,\ell}(\rho)\Big|^2\Big)^{1/2}\varphi(\rho)
\Big\|_{L^2_\rho(I)},
\end{equation}
where we have used ~\eqref{4.24}~ and $\text{supp}\varphi\subset I=[1,2].$

{\bf Case 2:  $|m|<4R.$} We get based on ~\eqref{4.26}~ and ~\eqref{4.27}~
\begin{align*}
|\psi_m^\nu(r)|\leq\frac{C_N}{2\pi}&\Big(\int_{\{\theta: |\theta|<2\delta,|r\sin\theta-m|\leq1\}}\mathrm{d}\theta
\\\nonumber&\quad+\int_{\{\theta: |\theta|<2\delta,|r\sin\theta-m|\geq1\}}
(1+|r\sin\theta-m|)^{-N}\mathrm{d}\theta\Big).
\end{align*}
Making the variable change $y=r\sin\theta-m$, we further have
\begin{align}\label{4.33}
|\psi_m^\nu(r)|\leq\frac{C_N}{2\pi r}\Big(\int_{\{y:|y|\leq1\}}\mathrm{d}y
+\int_{\{y:|y|\geq1\}}(1+|y|)^{-N}\mathrm{d}y\Big)\lesssim r^{-1}.
\end{align}

We put the set $A=\{j\in\mathbb{Z}:|t-\frac{j}{4}|<4R\}$ for fixed $t$ and $R$. Obviously, the cardinality of $A$ is $O(R)$. Then,  from \eqref{4.33} and \eqref{4.24}, we obtain
\begin{align*}
&\Big\|\Big(\sum_{\nu\in\Lambda_\infty}\sum_{\ell=1}^{d(\nu)}\Big|\sum_{j\in A}b^j_{\nu,\ell}\psi^\nu_m(r)\Big|^2\Big)^{1/2}\Big\|_
{L^\infty_tL^\infty_r(\R\times S_R)}  \\
\leq& C_{\delta,N}R^{-\frac{1}{2}}\Big(\sum_{\nu\in\Lambda_\infty}\sum_{\ell=1}^{d(\nu)}\sum_j|b^j_{\nu,\ell}|^2\Big)^{1/2}\\
=&C_{\delta,N}R^{-\frac{1}{2}}\Big(\sum_{\nu\in\Lambda_\infty}\sum_{\ell=1}^{d(\nu)}\|b_{\nu,\ell}(\rho)\|^2_{L^2_\rho}\Big)^{1/2}\\
\lesssim& R^{-\frac{1}{2}}\Big\|\Big(\sum_{\nu\in\Lambda_\infty}\sum_{\ell=1}^{d(\nu)}|b_{\nu,\ell}(\rho)|^2\Big)^{1/2}\Big\|_{L^2_\rho(I)}.
\end{align*}

\end{proof}

As a consequence of the interpolation and Proposition \ref{LRE}, we obtain
\begin{proposition}\label{LRE'}
For $q\geq2$, we have
\begin{align}\nonumber
&\Big\|\Big(\sum_{\nu\in\Lambda_\infty}\sum_{\ell=1}^{d(\nu)}\Big|\int_0^\infty r^{-\frac{n-2}{2}}J_{\nu}(r\rho)e^{-it\rho}b_{\nu,\ell}(\rho)\varphi(\rho)\rho^{\frac{n-2}{2}}\mathrm{d}\rho\Big|^2\Big)^{1/2}\Big\|_
{L^q_tL^q_{r^{n-1}\mathrm{d}r}(\R\times S_R)}  \\\label{LRE'1}
\lesssim&
\min\{R^{\frac{n}{q}+\nu_0-\frac{n-2}2},R^{-\frac{3n-4}{6}[1-\frac{2(3n-1)}{(3n-4)q}]}\}\Big\|\Big(\sum_{\nu\in\Lambda_\infty}
\sum_{\ell=1}^{d(\nu)}|
b_{\nu,\ell}(\rho)|^2\Big)^{1/2}\varphi(\rho)\Big\|_{L^{q'}_\rho(I)},
\end{align}
and
\begin{align}\nonumber
&\Big\|\Big(\sum_{\nu\in\Lambda_\infty}\sum_{\ell=1}^{d(\nu)}\Big|\int_0^\infty r^{-\frac{n-2}{2}}J_{\nu}(r\rho)e^{-it\rho}b_{\nu,\ell}(\rho)\varphi(\rho)\rho^{\frac{n-2}{2}}\mathrm{d}\rho\Big|^2\Big)^{1/2}\Big\|_
{L^q_tL^q_{r^{n-1}\mathrm{d}r}(\R\times S_R)}  \\\label{LRE'2}
\lesssim&
\min\{R^{\frac{n}{q}+\nu_0-\frac{n-2}2},R^{-\frac{n-1}{2}[1-\frac{2n}{(n-1)q}]}\}\Big\|\Big(\sum_{\nu\in\Lambda_\infty}\sum_{\ell=1}^{d(\nu)}|
b_{\nu,\ell}(\rho)|^2\Big)^{1/2}\varphi(\rho)\Big\|_{L^2_\rho(I)}.
\end{align}

\end{proposition}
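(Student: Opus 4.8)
The plan is to obtain Proposition~\ref{LRE'} from Proposition~\ref{LRE} by Riesz--Thorin interpolation. First I would record that, thanks to the identity $(r\rho)^{-\frac{n-2}{2}}\rho^{-1}\rho^{n-1}=r^{-\frac{n-2}{2}}\rho^{\frac{n-2}{2}}$, the left-hand side of \eqref{LRE'1} and \eqref{LRE'2} is exactly $\big\|\big(\sum_{\nu,\ell}|\mathcal{H}_{\nu}[\rho^{-1}e^{-it\rho}\varphi(\rho)b_{\nu,\ell}(\rho)](r)|^2\big)^{1/2}\big\|_{L^q_tL^q_{r^{n-1}\mathrm{d}r}(\R\times S_R)}$, so Proposition~\ref{LRE} applies directly. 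Viewing each norm $\big\|\big(\sum_{\nu,\ell}|\cdot|^2\big)^{1/2}\big\|_{L^p}$ as the $\ell^2_{\nu,\ell}$-valued Lebesgue norm $\|\cdot\|_{L^p(\ell^2)}$, estimates \eqref{L-2}, \eqref{L-infty1}, \eqref{L-infty} say precisely that the linear map $T\colon (b_{\nu,\ell})\mapsto\big(\mathcal{H}_{\nu}[\rho^{-1}e^{-it\rho}\varphi\, b_{\nu,\ell}]\big)$ satisfies, with the shorthand $L^p_{t,r}=L^p_t(\R;L^p_{r^{n-1}\mathrm{d}r}(S_R))$ and $L^p_\rho=L^p_\rho(I)$,
\[
\|T\|_{L^2_\rho(\ell^2)\to L^2_{t,r}(\ell^2)}\lesssim\min\{R^{\nu_0+1},R^{1/2}\},\qquad \|T\|_{L^1_\rho(\ell^2)\to L^\infty_{t,r}(\ell^2)}\lesssim\min\{R^{\nu_0-\frac{n-2}{2}},R^{-\frac{n-2}{2}-\frac13}\},
\]
\[
\|T\|_{L^2_\rho(\ell^2)\to L^\infty_{t,r}(\ell^2)}\lesssim\min\{R^{\nu_0-\frac{n-2}{2}},R^{-\frac{n-1}{2}}\}.
\]

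Next I would interpolate. For \eqref{LRE'1}, complex (Riesz--Thorin) interpolation of the first estimate with weight $\tfrac2q$ and the second with weight $1-\tfrac2q$ (admissible since $q\ge2$) yields $T\colon L^{q'}_\rho(\ell^2)\to L^q_{t,r}(\ell^2)$ with norm at most $\big(\min\{R^{\nu_0+1},R^{1/2}\}\big)^{2/q}\big(\min\{R^{\nu_0-\frac{n-2}{2}},R^{-\frac{n-2}{2}-\frac13}\}\big)^{1-2/q}$; for \eqref{LRE'2}, interpolating the first and third estimates with the same weights (the data space $L^2_\rho$ being unchanged) gives $T\colon L^2_\rho(\ell^2)\to L^q_{t,r}(\ell^2)$ with norm at most $\big(\min\{R^{\nu_0+1},R^{1/2}\}\big)^{2/q}\big(\min\{R^{\nu_0-\frac{n-2}{2}},R^{-\frac{n-1}{2}}\}\big)^{1-2/q}$.

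Finally I would evaluate these products in the two regimes $R\lesssim1$ and $R\gg1$, just as in the proof of Proposition~\ref{LRE}. When $R\lesssim1$ each minimum is realised by its first power, and both products collapse to the monomial $R^{(\nu_0+1)\frac2q+(\nu_0-\frac{n-2}{2})(1-\frac2q)}=R^{\frac nq+\nu_0-\frac{n-2}{2}}$. When $R\gg1$ each minimum is realised by its second power, and (using $\tfrac{n-2}{2}+\tfrac13=\tfrac{3n-4}{6}$) the first product becomes $R^{\frac1q-\frac{3n-4}{6}(1-\frac2q)}=R^{-\frac{3n-4}{6}[1-\frac{2(3n-1)}{(3n-4)q}]}$ while the second becomes $R^{\frac1q-\frac{n-1}{2}(1-\frac2q)}=R^{-\frac{n-1}{2}[1-\frac{2n}{(n-1)q}]}$. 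A one-line check shows the first exponent $\tfrac nq+\nu_0-\tfrac{n-2}{2}$ strictly exceeds the corresponding second exponent (the difference is $\nu_0+\tfrac13+\tfrac1{3q}$ for \eqref{LRE'1} and $\nu_0+\tfrac12$ for \eqref{LRE'2}, both positive), so the monomial from $R\lesssim1$ dominates for $R<1$ and the one from $R\gg1$ dominates for $R>1$; combining the two regimes therefore reproduces exactly the minimum appearing in \eqref{LRE'1} and \eqref{LRE'2}. There is no real obstacle here --- the argument is pure interpolation --- and the only points demanding attention are the use of the $\ell^2_{\nu,\ell}$-valued Riesz--Thorin theorem and the bookkeeping that matches the interpolated monomials with the minima in the statement.
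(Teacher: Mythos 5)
Your proposal is correct and follows exactly the route the paper intends: Proposition \ref{LRE'} is stated there as a direct consequence of ($\ell^2$-valued) Riesz--Thorin interpolation of \eqref{L-2} with \eqref{L-infty1}, resp.\ \eqref{L-2} with \eqref{L-infty}, at weight $\theta=\tfrac2q$, and your exponent bookkeeping (e.g.\ $(\nu_0+1)\tfrac2q+(\nu_0-\tfrac{n-2}2)(1-\tfrac2q)=\tfrac nq+\nu_0-\tfrac{n-2}2$ and $\tfrac1q-\tfrac{3n-4}{6}(1-\tfrac2q)=-\tfrac{3n-4}{6}[1-\tfrac{2(3n-1)}{(3n-4)q}]$) checks out. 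Nothing is missing; one could even skip the case split $R\lesssim1$ versus $R\gg1$ by using $\min\{a_1,a_2\}^{\theta}\min\{b_1,b_2\}^{1-\theta}\le\min\{a_1^{\theta}b_1^{1-\theta},a_2^{\theta}b_2^{1-\theta}\}$, but your version is equally valid.
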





\section{Proof of Theorem \ref{thm:main}}

In this section, we prove Theorem \ref{thm:main} based on the estimates of Hankel transform in Proposition \ref{LRE'}.
In the end of this section, we construct an counterexample to show the necessity of \eqref{add-cond}. \vspace{0.2cm}

From \eqref{sol}, it suffices to estimate
\begin{align}
&\|u(t,r,\theta)\|_{L^{q}_{t}(\R; L^q_{r^{n-1}dr}((0,\infty);L^{2}_\theta(Y)))}\\\nonumber
\lesssim& \sum_{\pm} \Big\|\Big(\sum_{\nu\in\Lambda_\infty}\sum_{\ell=1}^{d(\nu)}
\Big|\int_0^\infty r^{-\frac{n-2}{2}}J_{\nu}(r\rho)e^{\pm it\rho}
b_{\nu,\ell}(\rho)\rho^{\frac{n-2}{2}}\mathrm{d}\rho\Big|^2\Big)^\frac12\Big\|_{L^q_tL^q_{r^{n-1}\;dr}(\R\times\R_+)}
\end{align}
By the symmetry of $t$, we only need to consider one of signs $\pm$. We only consider the minus sign and apply dyadic decompositions
to obtain
\begin{align*}
&\Big\|\Big(\sum_{\nu\in\Lambda_\infty}\sum_{\ell=1}^{d(\nu)}
\Big|\int_0^\infty r^{-\frac{n-2}{2}}J_{\nu}(r\rho)e^{-it\rho}
b_{\nu,\ell}(\rho)\rho^{\frac{n-2}{2}} \mathrm{d}\rho\Big|^2\Big)^\frac12\Big\|_{L^q_tL^q_{r^{n-1}\;dr}(\R\times\R_+)}
\\\nonumber
\lesssim& \Big(\sum_R\Big(\sum_M\Big\|\Big(\sum_{\nu\in\Lambda_\infty}\sum_{\ell=1}^{d(\nu)}\Big|\int_0^\infty r^{-\frac{n-2}{2}}J_{\nu}(r\rho) e^{-it\rho}
b_{\nu,\ell}(\rho)\varphi(\frac{\rho}{M})\rho^{\frac{n-2}{2}}\;d\rho
\Big|^2\Big)^\frac12\Big\|_{L^q_tL^q_{r^{n-1}\;dr}(\R\times S_R)}\Big)^q\Big)^{1/q}
\end{align*}
where both $R$ and $M$ are dyadic numbers, $\varphi\in C^\infty_c([1,2])$ values in $[0,1]$ such that $\sum\limits_{M\in2^{\Z}}\varphi(\rho/M)=1$. Furthermore, by scaling argument, we obtain
\begin{align}\label{3.12}
&\|u(t,r,\theta)\|_{L^{q}_{t}(\R; L^q_{r^{n-1}dr}((0,\infty);L^{2}_\theta(Y)))}\\\nonumber
\lesssim& \Big(\sum_R\Big(\sum_MM^{(n-1)-\frac{n+1}{q}}\Big\|
\Big(\sum_{\nu\in\Lambda_\infty}\sum_{\ell=1}^{d(\nu)}\Big|\int_0^\infty r^{-\frac{n-2}{2}}J_{\nu}(r\rho)\\\nonumber
&\qquad\qquad \times e^{-it\rho}
b_{\nu,\ell}(M\rho)\varphi(\rho) \rho^{\frac{n-2}{2}}\;d\rho\Big|^2\Big)^\frac12
\Big\|_{L^q_tL^q_{r^{n-1}\;dr}(\R\times S_{MR})}\Big)^q\Big)^{1/q}.
\end{align}

For our purpose, we divide into two cases.

$\bullet$ {\bf Case1:} $1\leq p\leq2$. By \eqref{LRE'1}, we obtain
\begin{align}\label{3.13}
&\|u(t,r,\theta)\|_{L^{q}_{t}(\R; L^q_{r^{n-1}dr}((0,\infty);L^{2}_\theta(Y)))}\\\nonumber
\lesssim& \Big(\sum_R\Big(\sum_MM^{(n-1)-\frac{n+1}{q}}
\min\{(RM)^{\frac{n}{q}+\nu_0-\frac{n-2}2},(RM)^{-\frac{3n-4}{6}[1-\frac{2(3n-1)}{(3n-4)q}]}\}
\\\nonumber
&\qquad\qquad\times \Big\|\Big(\sum_{\nu\in\Lambda_\infty}\sum_{\ell=1}^{d(\nu)}|b_{\nu,\ell}(M\rho)|^2\Big)^\frac12\varphi(\rho)\Big\|_{L^{p}_\rho(I)}
\Big)^q\Big)^{1/q}
\end{align}
where we use the fact that  $$\frac{n+1}q=\frac{n-1}{p'}\Leftrightarrow \frac1{q'}= \frac{1}p+\frac{2}{p'(n+1)}$$ implies $p\geq q'$; On
the other hand, since $1\leq p\leq 2$ and $n\geq2$, one has
\begin{equation}\label{p<2}
q\geq \frac{2(n+1)}{n-1}>\frac{2(3n-1)}{3n-4}.
\end{equation}

If $\nu_0\geq \tfrac{n-2}2$, the fact \eqref{p<2} is enough to guarantee
\begin{equation}\label{S-cond1}
\sup_{R>0}\sum_{M}\min\big\{(RM)^{\frac{n}{q}+\nu_0-\frac{n-2}2},
(RM)^{-\frac{3n-4}{6}[1-\frac{2(3n-1)}{(3n-4)q}]}\big\}<\infty,
\end{equation}
and
\begin{equation}\label{S-cond2}
\sup_{M>0}\sum_{R}\min\big\{(RM)^{\frac{n}{q}+\nu_0-\frac{n-2}2},
(RM)^{-\frac{3n-4}{6}[1-\frac{2(3n-1)}{(3n-4)q}]}\big\}<\infty.
\end{equation}
However, if $0<\nu_0\leq \tfrac{n-2}2$, we need \eqref{add-cond} to ensure \eqref{S-cond1} and \eqref{S-cond2} to be true.
Therefore, by Schur test's lemma,  we show
\begin{align}\label{3.14}
&\|u(t,r,\theta)\|_{L^{q}_{t}(\R; L^q_{r^{n-1}dr}((0,\infty);L^{2}_\theta(Y)))}\nonumber\\
\lesssim& \Big(\sum_M\Big\|\Big(\sum_{\nu\in\Lambda_\infty}\sum_{\ell=1}^{d(\nu)}
\Big|b_{\nu,\ell}(\rho)\Big|^2\Big)^\frac12\varphi(\frac{\rho}{M})\rho^{\frac{n-2}{p}}
\Big\|^p_{L^p_\rho}\Big)^{1/p}\nonumber\\
\lesssim&\Big\|\Big(\sum_{\nu\in\Lambda_\infty}\sum_{\ell=1}^{d(\nu)}
|b_{\nu,\ell}(\rho)|^2\Big)^\frac12\rho^{-\frac1p}\Big\|_{L^p_{\rho^{n-1}\;d\rho}(\R_+)}.
\end{align}

$\bullet$ {\bf Case 2:} $p\geq2$. By \eqref{LRE'2}, we have
\begin{align}\label{3.13}
&\|u(t,r,\theta)\|_{L^{q}_{t}(\R; L^q_{r^{n-1}dr}((0,\infty);L^{2}_\theta(Y)))}\\\nonumber
\lesssim& \Big(\sum_R\Big(\sum_MM^{(n-1)-\frac{n+1}{q}}
\min\{(RM)^{\frac{n}{q}+\nu_0-\frac{n-2}2},(RM)^{-\frac{n-1}{2}[1-\frac{2n}{(n-1)q}]}\}\\\nonumber
&\qquad\qquad\times \Big\|\Big(\sum_{\nu\in\Lambda_\infty}\sum_{\ell=1}^{d(\nu)}|b_{\nu,\ell}(M\rho)|^2\Big)^\frac12\varphi(\rho)\Big\|_{L^2_\rho(I)}\Big)^q\Big)^{1/q}.
\end{align}

By noting that $\frac{n+1}q=\frac{n-1}{p'}$ and $p\geq 2$ and using scaling, we have
\begin{align}
&\|u(t,r,\theta)\|_{L^{q}_{t}(\R; L^q_{r^{n-1}dr}((0,\infty);L^{2}_\theta(Y)))}\\\nonumber
\lesssim& \Big(\sum_R\Big(\sum_MM^{(n-1)-\frac{n+1}{q}}
\min\{(RM)^{\frac{n}{q}+\nu_0-\frac{n-2}2},(RM)^{-\frac{n-1}{2}[1-\frac{2n}{(n-1)q}]}\}\\\nonumber
&\qquad\qquad\times \Big\|\Big(\sum_{\nu\in\Lambda_\infty}\sum_{\ell=1}^{d(\nu)}|b_{\nu,\ell}(M\rho)|^2\Big)^\frac12\varphi(\rho)\rho^{\frac{n-2}p}
\Big\|_{L^p_\rho(I)}\Big)^q\Big)^{1/q}
\\\nonumber
\lesssim& \Big(\sum_R\Big(\sum_M\min\{(RM)^{\frac{n}{q}+\nu_0-\frac{n-2}2},(RM)^{-\frac{n-1}{2}[1-\frac{2n}{(n-1)q}]}\}\\\nonumber
&\qquad\qquad\times \Big\|\Big(\sum_{\nu\in\Lambda_\infty}\sum_{\ell=1}^{d(\nu)}|b_{\nu,\ell}(\rho)|^2\Big)^\frac12\varphi(\frac{\rho}M)\rho^{\frac{n-2}p}
\Big\|_{L^p_\rho(\R)}\Big)^q\Big)^{1/q}.
\end{align}

If $\nu_0\geq \tfrac{n-2}2$, the condition $q>\frac{2n}{n-1}$ in \eqref{condition'} is enough to guarantee
\begin{equation}\label{S-cond1'}
\sup_{R>0}\sum_{M}\min\big\{(RM)^{\frac{n}{q}+\nu_0-\frac{n-2}2},
(RM)^{-\frac{n-1}{2}[1-\frac{2n}{q(n-1)}]}\big\}<\infty,
\end{equation}
and
\begin{equation}\label{S-cond2'}
\sup_{M>0}\sum_{R}\min\big\{(RM)^{\frac{n}{q}+\nu_0-\frac{n-2}2},
(RM)^{-\frac{n-1}{2}[1-\frac{2n}{q(n-1)}]}\big\}<\infty.
\end{equation}
However, if $0<\nu_0\leq \tfrac{n-2}2$, we need \eqref{add-cond} again to ensure \eqref{S-cond1'} and \eqref{S-cond2'} to be true.
Therefore, by Schur's lemma and $\ell^{p} \hookrightarrow\ell^q$ since $q>\frac{2n}{n-1}>p$, we show
\begin{align}\label{3.14}
&\|u(t,r,\theta)\|_{L^{q}_{t}(\R; L^q_{r^{n-1}dr}((0,\infty);L^{2}_\theta(Y)))}\nonumber\\
\lesssim& \Big(\sum_M\Big\|\Big(\sum_{\nu\in\Lambda_\infty}\sum_{\ell=1}^{d(\nu)}
\Big|b_{\nu,\ell}(\rho)\Big|^2\Big)^\frac12\varphi(\frac{\rho}{M})\rho^{\frac{n-2}{p}}
\Big\|^p_{L^p_\rho}\Big)^{1/p}\nonumber\\
\lesssim&\Big\|\Big(\sum_{\nu\in\Lambda_\infty}\sum_{\ell=1}^{d(\nu)}
|b_{\nu,\ell}(\rho)|^2\Big)^\frac12\rho^{-\frac1p}\Big\|_{L^p_{\rho^{n-1}\;d\rho}(\R_+)}.
\end{align}
In sum,  by orthogonality formula \eqref{orth}, we prove
\begin{align}
\|u(t,r,\theta)\|_{L^{q}_{t}(\R; L^q_{r^{n-1}dr}((0,\infty);L^{2}_\theta(Y)))}\lesssim\Big\|\rho^{-\frac1p}\hat{f}(\rho,\omega)\Big\|_{L^p_{\rho^{n-1}\;d\rho}
([0,\infty);L^2_\omega(Y))}.
\end{align}

In the end of this section,  we show the necessity of assumption \eqref{add-cond} by constructing a counterexample.
We will prove

\begin{proposition}
Let $\nu_0$ be in Theorem \ref{thm:main} and let $q$ satisfy \eqref{condition'} but $q\geq \frac{2n}{n-2-2\nu_0}$. Then there exists a counterexample such that the inequality $\eqref{est:restriction}$ fails.

\end{proposition}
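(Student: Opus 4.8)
The plan is to produce an explicit counterexample concentrated on the single angular mode attached to the smallest eigenvalue $\nu_0^2$, which is exactly the mode making the propagator most singular at the cone tip $r=0$; that tip singularity is the source of the failure. Concretely, I would fix $\varphi\in\mathcal{C}_c^\infty([1,2])$ with $\varphi\ge0$ and $\varphi\not\equiv0$, and let $f$ be the function whose distorted Fourier transform is $\hat f(\rho,\omega)=\varphi(\rho)\,Y_{\nu_0,1}(\omega)$; in the notation of \eqref{in-f}--\eqref{sol} this means $b_{\nu_0,1}=\varphi$ and $b_{\nu,\ell}\equiv0$ for every other pair $(\nu,\ell)$. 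With this choice the right-hand side of \eqref{est:restriction} is manifestly finite, equal to $\big(\int_1^2\rho^{\,n-2}\varphi(\rho)^p\,\mathrm{d}\rho\big)^{1/p}$, so the whole point is to show that the left-hand side is infinite as soon as $q\ge\frac{2n}{n-2-2\nu_0}$.

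By \eqref{sol} the corresponding solution reduces to the single-mode expression
\[
u(t,r,\theta)=Y_{\nu_0,1}(\theta)\int_0^\infty(r\rho)^{-\frac{n-2}2}J_{\nu_0}(r\rho)\,\sin(t\rho)\,\varphi(\rho)\,\rho^{\,n-2}\,\mathrm{d}\rho .
\]
The next step is to read off the behaviour as $r\to0^+$. From the power series of the Bessel function, for $r\rho\le 2$ one has $(r\rho)^{-\frac{n-2}2}J_{\nu_0}(r\rho)=c_{\nu_0}(r\rho)^{\nu_0-\frac{n-2}2}\big(1+O((r\rho)^2)\big)$ with $c_{\nu_0}=2^{-\nu_0}/\Gamma(\nu_0+1)>0$, and, since $\rho\in[1,2]$ and $\varphi$ is fixed, the remainder is $O(r^2)$ uniformly in $t$ after integration. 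Hence
\[
u(t,r,\theta)=c_{\nu_0}\,r^{\nu_0-\frac{n-2}2}\,Y_{\nu_0,1}(\theta)\,G(t)+r^{\nu_0-\frac{n-2}2}\,Y_{\nu_0,1}(\theta)\,O(r^{2}),
\]
where $G(t)=\int_0^\infty\rho^{\,\nu_0+\frac{n-2}2}\sin(t\rho)\,\varphi(\rho)\,\mathrm{d}\rho$ is real-analytic and odd with $G'(0)=\int_0^\infty\rho^{\,\nu_0+\frac n2}\varphi(\rho)\,\mathrm{d}\rho>0$, so $G\not\equiv0$; I would then fix a compact interval $[a,b]\subset(0,\infty)$ on which $|G|\ge\delta>0$, which exists because $\{G\neq0\}$ is open and dense.

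The final step is the size estimate. Since $\|Y_{\nu_0,1}\|_{L^2(Y)}=1$, for $t\in[a,b]$ and $r$ below a threshold $r_0=r_0(\delta,c_{\nu_0})$ absorbing the $O(r^2)$ error one has $|u(t,r,\theta)|\gtrsim r^{\nu_0-\frac{n-2}2}\,|Y_{\nu_0,1}(\theta)|$, whence
\[
\|u\|_{L^q_t(\R;\,L^q_{\mathrm{rad}}(L^2_\mathrm{sph}))}^{q}\gtrsim(b-a)\int_0^{r_0}r^{\,q(\nu_0-\frac{n-2}2)+n-1}\,\mathrm{d}r .
\]
This radial integral diverges precisely when $q(\nu_0-\tfrac{n-2}2)+n-1\le-1$, that is, when $q\ge\frac{2n}{n-2-2\nu_0}$ (here one uses $0<\nu_0<\tfrac{n-2}2$). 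Under that hypothesis the left side of \eqref{est:restriction} is $+\infty$ while the right side is finite, so no finite constant $C_{p,q,n,X}$ is possible and \eqref{est:restriction} fails, which is exactly the claimed optimality of \eqref{add-cond}. I expect the only slightly delicate bookkeeping to be the uniform-in-$t$ control of the $O(r^2)$ remainder and the selection of $[a,b]$; alternatively one may take $f(r,\theta)=a(r)Y_{\nu_0,1}(\theta)$ with $a\in\mathcal{C}_c^\infty((0,\infty))$, $a\ge0$ (so $f\in\mathcal{C}_c^\infty(X^\circ)$ is unambiguously admissible and $b_{\nu_0,1}=\mathcal{H}_{\nu_0}a$), at the cost of re-verifying that the right-hand side of \eqref{est:restriction} stays finite, which reduces to $p<\frac{2(n-1)}{n-2-2\nu_0}$ and is automatic once $q\ge\frac{2n}{n-2-2\nu_0}$ and \eqref{condition'} hold.
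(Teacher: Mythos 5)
Your construction is correct and coincides with the paper's own counterexample: data whose distorted Fourier transform is a bump $\varphi(\rho)$ on $[1,2]$ in the lowest angular mode $\nu_0$, the small-argument expansion $J_{\nu_0}(z)=c_{\nu_0}z^{\nu_0}+O(z^{\nu_0+1})$, and the divergence of $\int_0 r^{q(\nu_0-\frac{n-2}2)+n-1}\,\mathrm{d}r$ exactly when $q\ge\frac{2n}{n-2-2\nu_0}$. The only differences are cosmetic (you integrate directly over $(0,r_0)$ and choose a time interval where $|G|\ge\delta$, whereas the paper integrates over $[\epsilon,1]$, uses $\sin(t\rho)\ge\frac12$ on $t\in[\pi/6,\pi/4]$, and lets $\epsilon\to0$), so this is essentially the paper's proof.
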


\begin{proof} We use the argument of \cite{ZZ2} to construct a counterexample.
Choose $\chi(\rho)\in C_c^\infty([1,2])$ to value in $[0,1]$, we take the initial data $f=(\mathcal{H}_{\nu_0}\chi)(r)$, which is independent of the angular variable $\theta$. Then
the distorted Fourier transform of $f$ is the Hankel transform. Therefore, we obtain
\begin{align}
\big\|\rho^{-\frac1p}\hat{f}(\rho,\omega)\big\|_{L^p_{\rho^{n-1}\;d\rho}([0,\infty);L^2_\omega(Y))}=\big\|\rho^{-\frac1p}\chi(\rho)
\big\|_{L^p_{\rho^{n-1}\;d\rho}([0,\infty);L^2_\omega(Y))}<\infty.
\end{align}
 Since $q\geq \frac{2n}{n-2-2\nu_0}$, one has $\frac{1}{q}\leq\frac{1}{2}-\frac{1+\nu_0}{n}$. To lead a contradiction, we will show
\begin{equation}\label{contradiction}
\|u(t,r,\theta)\|_{L^{q}_{t}(\R; L^q_{r^{n-1}dr}((0,\infty);L^{2}_\theta(Y)))}=\infty,\quad \frac{1}{q}\leq\frac{1}{2}-\frac{1+\nu_0}{n}
\end{equation}
where $u(t,r,\theta)$ solves \eqref{equ:Lw}, that is,
\begin{align}
 u(t,r,\theta)=\int_0^\infty(r\rho)^{-\frac{n-2}2}J_{\nu_0}(r\rho)\sin (t\rho) \chi(\rho)\rho^{n-2}\;d\rho .
\end{align}
To prove \eqref{contradiction},
we recall the behavior of  $J_\nu(r)$ as $r\to 0+$. For the complex number $\Re(\nu)>-1/2$, see \cite[Section B.6]{G}, then we have that
\begin{equation}\label{Bessel}
J_{\nu}(r)=\frac{r^\nu}{2^\nu\Gamma(\nu+1)}+S_\nu(r)
\end{equation}
where
\begin{equation*}
S_{\nu}(r)=\frac{(r/2)^{\nu}}{\Gamma\left(\nu+\frac12\right)\Gamma(1/2)}\int_{-1}^{1}(e^{isr}-1)(1-s^2)^{(2\nu-1)/2}\mathrm{d
}s
\end{equation*}
satisfies
\begin{equation*}
|S_{\nu}(r)|\leq \frac{2^{-\Re\nu}r^{\Re\nu+1}}{(\Re\nu+1)|\Gamma(\nu+\frac12)|\Gamma(\frac12)}.\end{equation*}
Now we compute for any $0<\epsilon\ll 1$
\begin{align*}
&\|u(t,r,\theta)\|_{L^{q}_{t}(\R; L^q_{r^{n-1}dr}((0,\infty);L^{2}_\theta(Y)))}\\
=&\text{Vol}(Y)^{1/2}\left\|\int_0^\infty(r\rho)^{-\frac{n-2}2}J_{\nu_0}(r\rho)\sin(t\rho)\chi(\rho)\rho^{n-2}\mathrm{d}\rho
\right\|_{L^q(\R;L^q_{r^{n-1}dr}(0,\infty))}\\
\geq &c\left\|\int_0^\infty(r\rho)^{-\frac{n-2}2}J_{\nu_0}(r\rho)\sin(t\rho)\chi(\rho)\rho^{n-2}\mathrm{d}\rho
\right\|_{L^q([\pi/6,\pi/4];L^q_{r^{n-1}dr}[\epsilon,1])}\\
\geq& c\left\|\int_0^\infty(r\rho)^{-\frac{n-2}2}(r\rho)^{\nu_0}\sin(t\rho)\chi(\rho)\rho^{n-2}\mathrm{d}\rho
\right\|_{L^q([\pi/6,\pi/4];L^q_{r^{n-1}dr}[\epsilon,1])}\\
&-\left\|\int_0^\infty(r\rho)^{-\frac{n-2}2}S_{\nu_0}(r\rho)\sin(t\rho)\chi(\rho)\rho^{n-2}\mathrm{d}\rho
\right\|_{L^q([\pi/6,\pi/4];L^q_{r^{n-1}dr}[\epsilon,1])}.
\end{align*}
We first observe that
\begin{equation}
\begin{split}
&\left\|\int_0^\infty(r\rho)^{-\frac{n-2}2}S_{\nu_0}(r\rho)\sin(t\rho)\chi(\rho)\rho^{n-2}\mathrm{d}\rho
\right\|_{L^q([\pi/6,\pi/4];L^q_{r^{n-1}dr}[\epsilon,1])}\\
\leq& C\left\|\int_0^\infty(r\rho)^{-\frac{n-2}2}(r\rho)^{\nu_0+1}\chi(\rho)\rho^{n-2}\mathrm{d}\rho
\right\|_{L^q([\pi/6,\pi/4];L^q_{r^{n-1}dr}[\epsilon,1])}\\
\leq& C\max\big\{\epsilon^{\nu_0+1-\frac{n-2}2+\frac nq},1\big\}\\
\end{split}
\end{equation}
Next we estimate the lower boundness
\begin{align*}
&\left\|\int_0^\infty(r\rho)^{-\frac{n-2}2}(r\rho)^{\nu_0}\sin(t\rho)\chi(\rho)\rho^{n-2}\mathrm{d}\rho
\right\|_{L^q([\pi/6,\pi/4];L^q_{r^{n-1}dr}[\epsilon,1])}\\
=&\left(\int_{\pi/6}^{\pi/4}\int_{\epsilon}^1 \left|\int_0^\infty(r\rho)^{-\frac{n-2}2}(r\rho)^{\nu_0}\sin(t\rho)\chi(\rho)\rho^{n-2}\mathrm{d}\rho\right|^q r^{n-1}drdt\right)^{1/q}\\
=&C\left(\int_{\pi/6}^{\pi/4}\left|\int_0^\infty\rho^{-\frac{n-2}2}\rho^{\nu_0}\sin(t\rho)\chi(\rho)\rho^{n-2}
\mathrm{d}\rho\right|^{q}dt\right)^{1/q}\times\begin{cases}\epsilon^{\nu_0-\frac{n-2}2+\frac nq} \quad\text{if}\quad \frac1q<\frac12-\frac{\nu_0+1}{n}\\
\ln\epsilon \quad\text{if}\quad \frac1q=\frac12-\frac{\nu_0+1}{n}
\end{cases}\\
\geq& c\begin{cases}\epsilon^{\nu_0-\frac{n-2}2+\frac nq} \quad\text{if}\quad \frac1q<\frac12-\frac{\nu_0+1}{n}\\
\ln\epsilon \quad\text{if}\quad \frac1q=\frac12-\frac{\nu_0+1}{n}
\end{cases}
\end{align*}
where we have used the fact that $\sin(\rho t)\geq 1/2$ for $t\in [\pi/6, \pi/4]$ and $\rho\in [1,2]$, and
\begin{equation}
\begin{split}
\left|\int_0^\infty\rho^{-\frac{n-2}2}\rho^{\nu_0}\sin(t\rho)\chi(\rho)\rho^{n-2}\mathrm{d}\rho\right|\geq \frac12\int_0^\infty\rho^{-\frac{n-2}2}\rho^{\nu_0}\chi(\rho)\rho^{n-2}\mathrm{d}\rho\geq c.
\end{split}
\end{equation}
Hence, we obtain
\begin{equation}
\begin{split}
\|u(t,r,\theta)\|_{L^{q}_{t}(\R; L^q_{r^{n-1}dr}((0,\infty);L^{2}_\theta(Y)))}&\geq c\epsilon^{\nu_0-\frac{n-2}2+\frac nq}-C\max\big\{\epsilon^{\nu_0+1-\frac{n-2}2+\frac nq},1\big\}
\\&\geq c\epsilon^{\nu_0-\frac{n-2}2+\frac nq}\to +\infty \quad \text{as}\quad \epsilon\to 0
\end{split}
\end{equation}
when $\frac1q<\frac12-\frac{\nu_0+1}n.$ And when $\frac1q=\frac12-\frac{\nu_0+1}n$, we get
\begin{align*}
\|u(t,r,\theta)\|_{L^{q}_{t}(\R; L^q_{r^{n-1}dr}((0,\infty);L^{2}_\theta(Y)))}\geq c\ln\epsilon-C\to +\infty \quad \text{as}\quad \epsilon\to 0.
\end{align*}

\end{proof}

We conclude this section by proving \eqref{est:stri'} in Remark \ref{rem:stri}.
\begin{proof}[The proof of  \eqref{est:stri'}] If $\text{supp}~\hat{f}\subset \{\rho\sim M\}$,
then by \eqref{LRE'2}, we have
\begin{align}\label{est:stri''}
&\|u(t,r,\theta)\|_{L^{q}_{t}(\R; L^q_{r^{n-1}dr}((0,\infty);L^{2}_\theta(Y)))}\\\nonumber
\lesssim& M^{\frac{n-1}2-\frac{n+1}q} \Big(\sum_R\Big(\min\{(RM)^{\frac{n}{q}+\nu_0-\frac{n-2}2},(RM)^{-\frac{n-1}{2}[1-\frac{2n}{(n-1)q}]}\}\\\nonumber
&\qquad\qquad\times \Big\|\Big(\sum_{\nu\in\Lambda_\infty}\sum_{\ell=1}^{d(\nu)}|b_{\nu,\ell}(\rho)|^2\Big)^\frac12\varphi(\frac{\rho}M)\rho^{\frac{n-2}2}
\Big\|_{L^2_\rho(\R)}\Big)^q\Big)^{1/q}.
\end{align}
By using the assumption $q>\frac{2n}{n-1}$ when $\nu_0>(n-2)/2$ and $\frac{2n}{n-2-2\nu_0}>q>\frac{2n}{n-1}$ when $0<\nu_0\leq (n-2)/2$,
we see the summation in $R$ converges. Thus we obtain
\begin{align}
\|u(t,r,\theta)\|_{L^{q}_{t}(\R; L^q_{r^{n-1}dr}((0,\infty);L^{2}_\theta(Y)))}
\lesssim M^{\frac{n-1}2-\frac{n+1}q}\|f\|_{\dot H^{-\frac12}},
\end{align}
which is \eqref{est:stri'}.
\end{proof}

\section{The proof of Theorem \ref{thm:KSS}}

In this section, we prove  Theorem \ref{thm:KSS} by using the above argument when $q=2$ and a slight modify the original argument for the wave equation in \cite{KSS}.
\vspace{0.2cm}

We first prove \eqref{est:locendec}. From \eqref{sol} again, it suffices to estimate
\begin{align*}
&\Big\|\Big(\sum_{\nu\in\Lambda_\infty}\sum_{\ell=1}^{d(\nu)}
\Big|\int_0^\infty r^{-\frac{n-2}{2}}J_{\nu}(r\rho)e^{-it\rho}
b_{\nu,\ell}(\rho)\rho^{\frac{n-2}{2}} \mathrm{d}\rho\Big|^2\Big)^\frac12\Big\|_{L^2_tL^2_{r^{n-1}\;dr}(\R\times (0,{\bf R}])}\\
\lesssim& \Big(\sum_{R\leq {\bf R}} \sum_M\sum_{\nu\in\Lambda_\infty}\sum_{\ell=1}^{d(\nu)}\Big\|r^{-\frac{n-2}2}J_{\nu}(r\rho)
b_{\nu,\ell}(\rho)\varphi(\frac{\rho}{M})\rho^{\frac{n-2}{2}}
\Big\|^2_{L^2_{d\rho} L^2_{r^{n-1}dr}(\R\times S_R)}\Big)^{1/2}
\\\nonumber
\lesssim& \Big(\sum_{R\leq {\bf R}} \sum_M M^{n-1}M^{-2}\sum_{\nu\in\Lambda_\infty}\sum_{\ell=1}^{d(\nu)}\Big\|r^{-\frac{n-2}2}J_{\nu}(r\rho)
b_{\nu,\ell}(M\rho)\varphi(\rho)\rho^{\frac{n-2}{2}}
\Big\|^2_{L^2_{d\rho} L^2_{r^{n-1}dr}(\R\times S_{MR})}\Big)^{1/2}
\end{align*}
By the proof of \eqref{L-2}, we have
\begin{align*}\nonumber
& \Big(\sum_{R\leq {\bf R}}  \sum_M M^{n-1}M^{-2}\sum_{\nu\in\Lambda_\infty}\sum_{\ell=1}^{d(\nu)}\Big\|r^{-\frac{n-2}2}J_{\nu}(r\rho)
b_{\nu,\ell}(M\rho)\varphi(\rho)\rho^{\frac{n-2}{2}}
\Big\|^2_{L^2_{d\rho} L^2_{r^{n-1}dr}(\R\times S_{MR})}\Big)^{1/2}\\\nonumber
\lesssim&  \Big(\sum_{R\leq {\bf R}} \sum_M M^{-2} \min\big\{(MR)^{2(1+\nu_0)},RM\big\}\sum_{\nu\in\Lambda_\infty}\sum_{\ell=1}^{d(\nu)}\Big\|
b_{\nu,\ell}(\rho)\varphi(\rho/M)\rho^{\frac{n-2}{2}}
\Big\|^2_{L^2_{d\rho}}\Big)^{1/2}\\\nonumber
\lesssim& {\bf R} \Big( \sum_M M^{-2}  M\sum_{\nu\in\Lambda_\infty}\sum_{\ell=1}^{d(\nu)}\Big\|
b_{\nu,\ell}(\rho)\varphi(\rho/M)\rho^{\frac{n-2}{2}}
\Big\|^2_{L^2_{d\rho}}\Big)^{1/2}\\
\lesssim& {\bf R} \Big( \sum_M \sum_{\nu\in\Lambda_\infty}\sum_{\ell=1}^{d(\nu)}\Big\|
b_{\nu,\ell}(\rho)\varphi(\rho/M)\rho^{-1}
\Big\|^2_{L^2_{\rho^{n-1}d\rho}}\Big)^{1/2}
\end{align*}
where we have used the fact that
\begin{align*}
\sum_{R\leq {\bf R}}  \min\{(MR)^{2(1+\nu_0)},RM\}\lesssim {\bf R} M.
\end{align*}
Then we prove
\begin{align}\label{equ:localres}
\sup_{{\bf R}>0}{\bf R}^{-\frac12}\|u(t,r,\theta)\|_{L^{2}_{t}(\R; L^2_{r^{n-1}dr}((0,{\bf R}];L^{2}_\theta(Y)))}
\lesssim \|f\|_{\dot H^{-1}}.
\end{align}

Next we prove \eqref{kss}. To this end, we first consider the case $\beta>\tfrac12$: in this range we have, by applying \eqref{equ:localres},
\begin{equation*}
\begin{split}
&\| \langle r\rangle^{-\beta} u(t,r,\theta) \|_{L^2([0,T];L^2(X))}\\
\lesssim& \sum_{j\geq0}2^{-j\beta}\|u(t,r,\theta)\|_{L^2([0,T]; L^2((0,2^{j+1}]\times Y))}\\
\lesssim&
\sum_{j\geq 0}2^{j(1/2-\beta)}\|f\|_{\dot H^{-1}}
\lesssim \|f\|_{\dot H^{-1}}.\end{split}
\end{equation*}

Now we consider the case $0\leq\beta\leq \tfrac12$; we divide into two cases.

{\bf Case 1: $T\leq 1$.}  Here, the estimate \eqref{kss} is weaker than the energy estimate
\begin{equation}\label{est:energy}
\|u(t,r,\theta) \|_{L^\infty_tL^2(X)}\leq \|f\|_{\dot{H}^{-1}},
\end{equation}
so that we can immediately write
$$
\|\langle r\rangle^{-\beta} u(t,r,\theta)\|_{L^2([0,T];L^2(X))}\lesssim T^{1/2}\|u(t,r,\theta)\|_{L^2([0,T];L^2(X))}\leq C_\beta(T)\|f\|_{\dot{H}^{-1}}.
$$

{\bf Case 2: $T\geq1$.}  we can use energy estimate \eqref{est:energy} to control on the region $\{r:r\geq T\}$ as follows
\begin{equation*}
\begin{split}
\|\langle r\rangle^{-\beta} u(t,r,\theta)\|_{L^2([0,T]; L^2((T,\infty)\times Y))} &\lesssim T^{-\beta}\|u(t,r,\theta) \|_{L^2([0,T];L^2(X))}\\&
\leq T^{\frac12-\beta}\|f\|_{\dot{H}^{-1}}\leq C_\beta(T)\|f\|_{\dot{H}^{-1}}.
\end{split}
\end{equation*}
While in the region that $\{r:r\leq T\}$, we estimate
\begin{align*}
 &\| \langle r\rangle^{-\beta} u(t,r,\theta) \|_{L^2([0,T];L^2((0,T]\times Y)}^2\\
\lesssim&\sum_{j=0}^{\ln(T+2)}2^{-2j\beta}\|u(t,r,\theta)\|_{L^2([0,T]; L^2((0,2^{j+1}]\times Y))}^2\\
\lesssim&
\sum_{j=0}^{\ln(T+2)}2^{j(1-2\beta)}\|f\|_{\dot H^{-1}}^2\\
\lesssim& \|f\|_{\dot H^{-1}}^2\times \begin{cases}
  T^{1-2\beta}\quad\text{if}\quad 0\leq\beta<\frac12\\
  \log(2+T)\quad\text{if}\quad \beta=\frac12
\end{cases}\\
\lesssim& C_\beta(T)^2\|f\|_{\dot H^{-1}}^2
\end{align*}
which is accepted.

Finally, we turn to
prove \eqref{kss2}. In the region $\{r:\;r\geq T\}$, we utilize the energy estimate \eqref{est:energy} to obtain
\begin{equation}\label{equ:outballset}
  \big\||r|^{-\beta} u(t,r,\theta)\big\|_{L^2([0,T]; L^2([T,\infty)\times Y))}\\ \lesssim T^{-\beta}\|u(t,r,\theta) \|_{L^2_TL^2_x}
\lesssim T^{\frac12-\beta}\|f\|_{\dot{H}^{-1}}.
\end{equation}
In the  region $\{r:\;r\leq T\}$, by \eqref{equ:localres} and $0\leq \beta<\tfrac12$, we get
\begin{align*}
 &\big\||r|^{-\beta} u(t,r,\theta)\big\|_{L^2([0,T];L^2((0,T]\times Y))}\\
\lesssim&\sum_{j=-\infty}^{\log_2 T}2^{-j\beta}\|u(t,r,\theta)\|_{L^2([0,T]; L^2([2^{j},2^{j+1}]\times Y))}\\
\lesssim&
\sum_{j=-\infty}^{\log_2 T}2^{j(\frac12-\beta)}\|f\|_{\dot H^{-1}}\\
\lesssim& T^{\frac12-\beta}\|f\|_{\dot H^{-1}}
\end{align*}
which implies \eqref{kss2}.

\begin{center}

\end{center}

\end{document}